\newtheorem{lemma}{Lemma}[section]
\newtheorem{thm}[lemma]{Theorem}
\newtheorem{prop}[lemma]{Proposition}
\newtheorem{cor}[lemma]{Corollary}
\newtheorem{thmintro}{Theorem}
\theoremstyle{definition}
\newtheorem{defn}[lemma]{Definition}
\theoremstyle{definition}
\definecolor{darkgreen}{cmyk}{1,0,1,.2}
\newcommand{\R} {\ensuremath {\mathbb{R}}}
\newcommand{\Z} {\ensuremath {\mathbb{Z}}}
\newcommand{\calC} {\ensuremath {\mathcal{C}}}
\newcommand{\calP} {\ensuremath {\mathcal{P}}}
\newcommand{\calA} {\ensuremath {\mathcal{A}}}
\newcommand{\calB} {\ensuremath {\mathcal{B}}}
\address{Department of Mathematics, ETH Zurich, 8092 Zurich, Switzerland}
\email{sisto@math.ethz.ch}
\begin{document}

\title{Quasi-convexity of hyperbolically embedded subgroups}
\author{Alessandro Sisto}

\begin{abstract}
 We show that any infinite order element $g$ of a virtually cyclic hyperbolically embedded subgroup of a group $G$ is Morse, that is to say any quasi-geodesic connecting points in the cyclic group $C$ generated by $g$ stays close to $C$. This answers a question of Dahmani-Guirardel-Osin. What is more, we show that hyperbolically embedded subgroups are quasi-convex.

Finally, we give a definition of what it means for a collection of subspaces of a metric space to be hyperbolically embedded and we show that axes of pseudo-Anosovs are hyperbolically embedded in Teichm\"uller space endowed with the Weil-Petersson metric.
\end{abstract}

\maketitle

\section{Introduction}

Hyperbolically embedded subgroups have been introduced by Dahmani-Guirardel-Osin in \cite{DGO}, and they give a way to capture common properties of several classes of groups including (relatively) hyperbolic groups, mapping class groups, $Out(F_n)$, many groups acting on $CAT(0)$ spaces and many others.

Osin further showed in \cite{OS-acyl} that the class of groups containing non-degenerate hyperbolically embedded subgroups coincides with other classes of groups that have been defined for the same purpose of providing a common perspective on all groups listed above \cite{BeFu-wpd,Ha-isomhyp, Si-contr}.

Given the extensive list of examples, it is interesting to study the geometry of (word metrics of) groups with hyperbolically embedded subgroups. In particular, \cite[Problems 9.3,9.4]{DGO} are about the existence of Morse elements in groups containing non-degenerate hyperbolically embedded subgroups. An element $g$ is Morse if the map $n\mapsto g^n$ is a quasi-isometric embedding and any $(\mu,c)$-quasi-geodesic whose endpoints are in $\langle g\rangle$ is contained in the neighbourhood $N_C(\langle g\rangle)$, where $C=C(\mu,c)$. The following gives a positive answer to Problem 9.4 and hence Problem 9.3 as well.

\begin{thmintro}\label{main1}
Let $G$ be a finitely generated group.
 If the infinite order element $g\in G$ is contained in a virtually cyclic subgroup $E(g)$ which is hyperbolically embedded in $G$, then $g$ is Morse.
\end{thmintro}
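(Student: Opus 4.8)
The plan is to show that $E(g)$ --- equivalently, since $[E(g):\langle g\rangle]<\infty$, the cyclic subgroup $\langle g\rangle$ --- is a \emph{strongly contracting} subset of $G$ for a word metric, and then to invoke the standard fact that a strongly contracting quasi-geodesic is Morse \cite{Si-contr}. Since $G$ is finitely generated we may take the relative generating set $S$ to be finite, and harmlessly also $S\cap E=\emptyset$ (if $E:=E(g)=G$ then $G$ is virtually cyclic and the statement is trivial, so assume $E\ne G$). Write $\widehat\Gamma=\Gamma(G,S\sqcup E)$, which is $\delta$-hyperbolic, and let $\widehat d$ denote the (locally finite) relative metric on $E$. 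Two remarks will be used throughout: (i) $E\subseteq N_{R_0}(\langle g\rangle)$ in $d_S$ by a left transversal argument, so it suffices to prove each statement with $E$ in place of $\langle g\rangle$; and (ii) a $d_S$-geodesic from $1$ to $g^{j}$ is a path in $\widehat\Gamma$ with no $E$-edges, so $\widehat d(1,g^{j})\le d_S(1,g^{j})$, and since $\widehat d$ is locally finite the set $F_R:=\{j\in\Z:d_S(1,g^{j})\le R\}$ is finite for every $R$.

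Following \cite{DGO} (and the author's earlier work on projections), I would attach to each left coset $Q=wE$ a coarse projection $\pi_Q\colon G\to Q$, built from $\widehat\Gamma$-geodesics and their $E$-components. The technical engine is the bound from \cite{DGO} on geodesic polygons in $\widehat\Gamma$: there is $C_0$ such that in any geodesic $n$-gon every isolated $E$-component has $\widehat d$-length at most $C_0 n$. Applying this to geodesic triangles and quadrilaterals one of whose sides is the length-$1$ ``$E$-edge'' of a coset, while the others are $\widehat\Gamma$-geodesics, one obtains: $\pi_Q$ is coarsely well defined (up to bounded $\widehat d$-error); distinct cosets satisfy a Behrstock-type inequality; and the crucial \emph{contraction estimate}, namely that if $x,y\in G$ have $\pi_Q$-images far apart then every $d_S$-quasi-geodesic from $x$ to $y$ passes uniformly $d_S$-close to $Q$, entering near $\pi_Q(x)$ and exiting near $\pi_Q(y)$. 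Taking $x,y$ on $Q$ gives in particular that $d_S$-geodesics between points of $Q$ stay in a uniform $d_S$-neighbourhood of $Q$ --- the quasi-convexity statement of the abstract, valid for every hyperbolically embedded subgroup.

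From these estimates, $E$ is strongly contracting in $(G,d_S)$: a ball disjoint from a large $d_S$-neighbourhood of $E$ has $\pi_E$-image of bounded $d_S$-diameter. Moreover $n\mapsto g^{n}$ is a quasi-isometric embedding: given $N$, write a $d_S$-geodesic $1=x_0,\dots,x_L=g^{N}$ with $L=d_S(1,g^{N})$; quasi-convexity and (i) give $g^{a_i}$ with $d_S(x_i,g^{a_i})\le C_0+R_0$ and $a_0=0$, $a_L=N$, so $d_S(g^{a_i},g^{a_{i+1}})\le 2(C_0+R_0)+1=:R_1$, hence $a_{i+1}-a_i\in F_{R_1}$, so $|a_{i+1}-a_i|\le P:=\max\{|j|:j\in F_{R_1}\}$ and $|N|\le PL$, i.e.\ $d_S(1,g^{N})\ge|N|/P$ (the reverse inequality being trivial). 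This is where local finiteness of $\widehat d$ --- and hence virtual cyclicity of $E(g)$, which makes $\langle g\rangle$ a quasi-line --- is really used. Finally, since $\langle g\rangle$ is a strongly contracting quasi-geodesic, every $(\mu,c)$-quasi-geodesic with endpoints in $\langle g\rangle$ lies in $N_{C(\mu,c)}(\langle g\rangle)$; together with the quasi-isometric embedding, this says exactly that $g$ is Morse.

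The main obstacle is the \emph{contraction estimate}: converting the statements of \cite{DGO}, which live in the hyperbolic graph $\widehat\Gamma$ and the relative metric $\widehat d$, into genuine $d_S$-estimates. The issue is that a $d_S$-geodesic is typically not a quasi-geodesic of $\widehat\Gamma$, so hyperbolicity cannot be imported directly; one must build geodesic polygons in $\widehat\Gamma$ from $\widehat\Gamma$-geodesics and the length-$1$ ``$E$-edges'' of cosets, and extract $d_S$-information from the fact that such an $E$-edge, when its $\widehat d$-length is large (which by local finiteness is exactly the relevant case, e.g.\ for the edge joining $g^{-n}$ to $g^{n}$), cannot be isolated and must be seen by a geodesic side --- while conversely a bound on $\widehat d$-distances upgrades to a bound on $d_S$-distances because $\widehat d$-balls are $d_S$-bounded. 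Making this dictionary work uniformly, with control on how a $d_S$-quasi-geodesic enters and leaves each coset, is the heart of the matter.
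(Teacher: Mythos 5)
Your route---upgrade $E=E(g)$ to a \emph{strongly contracting} subset of $(G,d_S)$ and then quote ``strongly contracting implies Morse''---is not the paper's route, and it has a genuine gap exactly where you yourself locate ``the heart of the matter''. The engine you propose, the bound of \cite{DGO} on isolated $E$-components of geodesic polygons in $\widehat\Gamma$, produces estimates in the relative metric $\widehat d$ and only for polygons whose sides are geodesics (or at least quasi-geodesics) of $\widehat\Gamma$. A $d_S$-geodesic or $d_S$-quasi-geodesic between points of $E$ is an essentially arbitrary path from the point of view of $\widehat\Gamma$, so none of those estimates apply to it; the ``contraction estimate'' you need (far-apart coset projections force every $d_S$-quasi-geodesic to pass uniformly $d_S$-close to the coset, entering and exiting near the projection points) simply does not follow from the cited machinery, and you give no argument for it---you only name the difficulty. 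Moreover the intermediate target is overshooting: strong contraction in the word metric ($d_S$-balls disjoint from $E$ have uniformly bounded closest-point projection) is strictly stronger than the Morse property, is not what \cite{Si-contr} proves (there one gets contraction relative to a path system/the relative metric, not $d_S$-balls), is sensitive to the choice of finite generating set, and examples of Arzhantseva--Cashen--Gruber--Hume in graphical small cancellation groups show that Morse (indeed generalized loxodromic) elements need not be strongly contracting in a word metric. So the plan aims at a statement that is at best unproved and plausibly false in this generality.

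The ingredient your sketch is missing---and the paper's actual new idea---is a word-metric separation statement obtained without projecting inside $\widehat\Gamma$: since $\{E\}\hookrightarrow_h(G,Y)$ means $X=Cay(G,Y)$ is hyperbolic relative to the cosets with $d_X|_E$ proper, the $G$-action on $X$ is acylindrical along $E$, and hence the preimages under the orbit map of two far-apart balls $B^X_r(x),B^X_r(y)$ with $x,y\in E$ are geometrically separated in the word metric (Lemmas \ref{fibgeomsep} and \ref{propersep}). This is then fed into a checkpoint argument (Proposition \ref{suplin}): a $1$-Lipschitz path in $G$ joining $h_1,h_2\in E$ while avoiding $N^G_K$ of an annulus either misses many checkpoint balls in $X$, each miss costing definite length by relative hyperbolicity of $X$ and the coarsely Lipschitz closest-point projection of Lemma \ref{rhfacts}, or meets many far-apart ones, in which case the word-metric geometric separation forces the path to be long. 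The resulting superlinear divergence gives Theorem \ref{main} and hence Theorem \ref{main1} (your reduction of the quasi-isometric embedding part to quasi-convexity is fine), with no need for $d_S$-strong contraction. To salvage your approach you would have to replace ``strongly contracting'' by a superlinear-divergence/sublinear-contraction statement of this kind and prove a separation lemma in $d_S$; the polygon estimates of \cite{DGO} alone will not control $d_S$-quasi-geodesics.
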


Several special cases of the theorem are known already \cite{Be-asgeommcg, DMS-div, A-K, BC-raagcones}.

In view of the fact that loxodromic elements for acylindrical actions on hyperbolic spaces give rise to hyperbolically embedded subgroups \cite[Theorem 4.42]{DGO}, Theorem \ref{main1} generalises \cite[Theorem 4.9]{DMS-div}. 

We will actually prove the following more general result.

\begin{thmintro}\label{main}
Let $G$ be finitely generated subgroup and let $\{H_i\}$ be a finite family of finitely generated subgroups of $G$.
 If $\{H_i\}\hookrightarrow_h G$, then for each $\mu,c$ there exists $C$ with the property that all $(\mu,c)$-quasi-geodesics in $G$ whose endpoints are in $H_i$ are contained in $N_C(H_i)$.
\end{thmintro}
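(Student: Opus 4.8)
The plan is to move the problem into the relative Cayley graph $\mathcal{G}=\Gamma(G,X\sqcup\mathcal{H})$ furnished by the hyperbolically embedded structure, where $\mathcal{H}=\bigsqcup_i H_i$, and to play its hyperbolicity against the Dahmani--Guirardel--Osin control of geodesic polygons. After reducing to the case that the relative generating set $X$ is finite (legitimate since $G$ and the $H_i$ are finitely generated; see \cite{DGO}), set $A=X\cup\bigcup_i S_i$ with $S_i$ finite generating sets of the $H_i$, and let $d_G=d_A$ be the ambient word metric in which the theorem is to be proved. Three facts are then essentially free: $d_{\mathcal{G}}\le d_G$ pointwise, since every $A$-edge is an edge of $\mathcal{G}$; the graph $\mathcal{G}$ is $\delta$-hyperbolic; and, from properness of the relative metrics $\hat d_i$ on $H_i$ together with finite generation, there is a function $f$ with $d_G(1,h)\le f(\hat d_i(1,h))$ for $h\in H_i$, so $\hat d_i$-bounded subsets of $H_i$ are $d_G$-bounded. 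I also recall from \cite{DGO} that the family $\{H_i\}$ is almost malnormal and that each coset $gH_i$ is undistorted in $(G,d_G)$.

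Fix a $(\mu,c)$-quasi-geodesic $q$ in $(G,d_G)$ from $1$ to $h\in H_i$; I must bound $d_G(v,H_i)$ for vertices $v$ of $q$, uniformly in $h$ and $i$. Fix a threshold $C_0$ (a function of the structure constants and $(\mu,c)$, to be pinned down at the end), and, assuming some vertex $v$ of $q$ has $d_G(v,H_i)>C_0$, let $q'$ be the maximal subpath of $q$ through $v$ all of whose vertices lie at $d_G$-distance $>C_0$ from $H_i$, with endpoints $a,b$; then $C_0<d_G(a,H_i)\le C_0+1$, and likewise for $b$. Let $h_a,h_b\in H_i$ be $d_G$-nearest points to $a,b$. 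Since $h_a,h_b\in H_i$ they span a single $H_i$-edge of $\mathcal{G}$, so $d_{\mathcal{G}}(h_a,h_b)\le 1$ and hence $d_{\mathcal{G}}(a,b)\le 2C_0+3$: the endpoints of $q'$ are $d_{\mathcal{G}}$-close although a priori $d_G$-far. It now suffices to bound the $d_G$-length of $q'$, for then $d_G(v,H_i)\le\mathrm{length}(q')+C_0+1$, and this is the sought constant $C=C(\mu,c)$.

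To bound $\mathrm{length}(q')$ I would proceed in two steps. First, show that $q'$, viewed inside $\mathcal{G}$, behaves like a relative quasi-geodesic without essential backtracking; since $d_{\mathcal{G}}(a,b)$ is bounded, this forces $q'$ to decompose into at most $N=N(C_0,\mu,c)$ maximal "coset pieces" — subpaths running within a bounded $d_G$-neighbourhood of single peripheral cosets $g_kH_{\lambda_k}$ — interspersed with at most $N$ edges of $X$. Second, because every vertex of $q'$ is at $d_G$-distance $>C_0$ from $H_i$, no coset piece runs along $H_i$ itself once $C_0$ exceeds the straying constant of the first step; then combining no-backtracking with almost malnormality and the isolated-components bound of \cite{DGO} for geodesic polygons in $\mathcal{G}$, applied to the quadrilateral with corners $h_a,a,b,h_b$ whose fourth side is the $H_i$-edge $h_ah_b$, one forces $\hat d_i(h_a,h_b)$ — hence by properness $d_G(h_a,h_b)$, hence (by undistortion of the cosets) each individual coset piece — to be bounded in terms of $C_0$ and the structure. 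With $N$ pieces each bounded and $N$ extra edges, $\mathrm{length}(q')$ is bounded; choosing $C_0$ larger than all constants produced along the way closes the loop.

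The principal obstacle is the clash between $d_G$ and $d_{\mathcal{G}}$: a $d_G$-quasi-geodesic is in general wildly non-quasi-geodesic in $\mathcal{G}$, since each $H_i$ is collapsed to diameter $1$ there, so one cannot simply fellow-travel $q'$ against a $\mathcal{G}$-geodesic. The technical heart is therefore the preliminary lemma that such a quasi-geodesic nonetheless projects to a relative quasi-geodesic without essential backtracking in $\mathcal{G}$, with all constants depending only on $(\mu,c)$ and on the data of the hyperbolically embedded structure — the analogue for hyperbolically embedded subgroups of the familiar phenomenon for relatively hyperbolic groups, and the ingredient that lets the polygon bound be applied to a quadrilateral with one honestly short side and lets the coset pieces be counted. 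A secondary point to watch is that only properness of the $\hat d_i$ is assumed, with no a priori distortion estimate; this suffices precisely because properness is used only in the weak direction "$\hat d_i$-bounded $\Rightarrow d_G$-bounded", and only after the polygon bound has placed $h_a$ and $h_b$ at bounded $\hat d_i$-distance. Everything else is hyperbolic-geometry bookkeeping and quotation of \cite{DGO}.
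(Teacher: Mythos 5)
There is a genuine gap, and it occurs at the two places where your outline leans hardest on quotation. First, the opening reduction ``we may take the relative generating set $X$ finite'' is not legitimate: for $G$ finitely generated, $\{H_i\}\hookrightarrow_h(G,X)$ with $X$ \emph{finite} is equivalent to $G$ being hyperbolic relative to $\{H_i\}$ in the classical sense. So after that reduction you are only treating the relatively hyperbolic case, which is exactly the case already known (Drutu--Sapir) and explicitly excluded as the point of the theorem; the interesting examples (e.g.\ $E(g)$ for a pseudo-Anosov $g$ in a mapping class group, where $Cay(G,X\sqcup\mathcal H)$ is a curve complex) force $X$ infinite. Second, once $X$ is infinite, the lemma you flag as the ``technical heart'' --- that a $(\mu,c)$-quasi-geodesic of the word metric $d_G$ projects to a relative quasi-geodesic without backtracking in $\mathcal G$, with constants depending only on $(\mu,c)$ and the structure, and hence decomposes into boundedly many coset pieces --- is not merely unproven but false in this generality. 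The only bridge from $\mathcal G$ (or from $\hat d_i$) back to $d_G$ is a properness function with no controlled growth, so $d_{\mathcal G}$ bounds $d_G$ from below only through an arbitrary function; a $d_G$-quasi-geodesic between $1$ and $g^n$ on a pseudo-Anosov axis has word length comparable to $n$ and can therefore make excursions of linear size in the coned-off graph, which destroys any uniform unparametrized-quasi-geodesic/no-backtracking statement and any bound $N=N(C_0,\mu,c)$ on the number of pieces. Relatedly, ``each coset $gH_i$ is undistorted in $(G,d_G)$'' is not a result of \cite{DGO}; undistortion (and even finite generation control of pieces) is essentially a consequence of the quasi-convexity being proven, so invoking it is circular, and properness of $\hat d_i$ alone cannot replace it with constants depending only on $(\mu,c)$.

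What your scheme is missing is precisely the paper's new ingredient: a way to see separation \emph{in the word metric} $d_G$ rather than in $\mathcal G$ or $\hat d_i$. The paper gets this from acylindricity of the $G$-action on $Cay(G,Y)$ along $H$ (which does use the group action and properness of a Cayley graph of $G$, not just the relative machinery), deducing that preimages of far-apart balls centred on $H$ are geometrically separated in $d_G$ (Lemmas \ref{fibgeomsep} and \ref{propersep}). This feeds a checkpoint/annulus argument (Proposition \ref{suplin}): a path between points of $H$ that strays $K$-far in $d_G$ from an annulus around $h_1$ must pay length either by detouring in the hyperbolic space $Cay(G,Y)$ or, when it hits many checkpoint balls, by crossing $d_G$-geometrically separated fibres; the resulting superlinear divergence contradicts the quasi-geodesic inequality and yields the neighbourhood bound. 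Your quadrilateral step (bounding $\hat d_i(h_a,h_b)$ by the isolated-component bound and then using properness) is fine as far as it goes, but it only controls the endpoints $h_a,h_b$, not the $d_G$-length of the excursion $q'$, and without a substitute for the geometric-separation mechanism the bound on $\mathrm{length}(q')$ does not close.
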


Theorem \ref{main} was previously known for relatively hyperbolic groups \cite{DS-treegr}.

Of course, the proof requires understanding geometric features of hyperbolically embedded subgroups, and the key one we will show is, roughly speaking, that if $H\hookrightarrow_h (G,X)$ then balls in $Cay(G,X)$ are geometrically separated in the word metric of $G$, see Lemma \ref{fibgeomsep}.

\subsection*{Hyperbolically embedded subspaces?}
The most important geometric question about groups with hyperbolically embedded subgroups is whether this notion is a quasi-isometry invariant (there is actually more than one way to specify what this means).

In order to study this question it is probably useful to have an appropriate notion of what it means for a family of subspaces to be hyperbolically embedded in a metric space. In the case of relatively hyperbolic groups, it has been the development of the theory of relatively hyperbolic metric spaces started in \cite{DS-treegr} that lead to the proof of quasi-isometric rigidity, at first with additional hypotheses in \cite{DS-treegr} and then in general in \cite{D-relhyp}.

Drawing inspiration from the key geometric fact we show about groups with hyperbolically embedded subgroups, Lemma \ref{fibgeomsep}, in Section \ref{specs} we suggest a possible definition of hyperbolic embedded family of subspaces of a metric space. It is straightforward to check that this definition is quasi-isometry invariant using the quasi-isometry invariance of relative hyperbolicity, see Proposition \ref{qiinv}. Analysing this definition, or suitable variations, might shed light on the question of quasi-isometry invariance of hyperbolically embedded subgroups.

A ``naturally occurring'' example of metric space that is not a group and is not relatively hyperbolic but contains hyperbolically embedded subspaces is Teichm\"uller space with the Weil-Petersson metric, the hyperbolically embedded subspaces being axes of pseudo-Anosovs, see Theorem \ref{teich}. For the non-existence of nontrivial relatively hyperbolic structures on the Teichm\"uller space of any sufficiently complicated surface see \cite{BDM-thick, BM-relhypteich}.

\section{Background}
We denote the Cayley graph of a group $G$ with respect to the generating system $Y$ by $Cay(G,Y)$. For $X$ a metric space, we denote its metric by $d_X$. Balls of radius $r$ are denote by $B^X_r(\cdot)$ (with ``$X$'' dropped if unambiguous). Similarly, the (closed) neighbourhood of radius $R$ of the subset $A$ of a metric space is denoted by $N^X_{R}(A)$ or $N_R(A)$.

\begin{defn}
 Let $X$ be a metric space. We say that $A,B\subseteq X$ are \emph{geometrically separated} if for each $D$ the diameter of $N_D(A)\cap B$ is finite.
\end{defn}

The most convenient characterization of hyperbolically embedded subgroups for our purposes is the following, see \cite[Theorem 6.4]{Si-metrrh}.

\begin{defn}
 For $G$ a finitely generated group and $H_1,\dots,H_n<G$ finitely generated subgroups we say that $\{H_i\}$ is \emph{hyperbolically embedded} in $(G,Y)$, where $Y$ is a subset of $G$, if $\Gamma=Cay(G,Y)$ is hyperbolic relative to the collections $\{gH_i\}$ of the left costs of the $H_i's$ and $d_{\Gamma}|_{H_i}$ is proper for each $i$. In such case we write $\{H_i\}\hookrightarrow_h (G,Y)$.
\end{defn}

Relatively hyperbolic spaces are also referred to in the literature as asymptotically tree-graded, most notably in \cite{DS-treegr}.

The only facts we need about relatively hyperbolic spaces are collected in the following lemma. The first item is \cite[Theorem 4.1-$(\alpha_1)$]{DS-treegr}, while (stronger versions of) the other two can be deduced from results in \cite{DS-treegr} and can be found in \cite{Si-proj}.

\begin{lemma}\label{rhfacts}
Let $X$ be a metric space hyperbolic relative to the collection of subsets $\calP$. For $P\in\calP$, denote by $\rho_P:X\to P$ any map satisfying $d(x,\rho_P(x))\leq d(x,P)+1$.
\begin{enumerate}
 \item For each $D$ there exists $B$ so that whenever $P,Q\in \calP$ are distinct the diameter of $N_D(P)\cap Q$ is at most $B$.
 \item There exists $C$ so that for each $P\in\calP$ and $x,y\in X$ we have $d(\rho_P(x),\rho_P(y))\leq Cd(x,y)+C$.
 \item There exists $C$ so that for each $P\in\calP$ if $d(\rho_P(x),\rho_P(y))\geq C$ for some $x,y\in X$ then $d(x,y)\geq d(x,P)$.
\end{enumerate}
\end{lemma}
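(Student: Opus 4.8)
The plan is as follows. Item~(1) is property~$(\alpha_1)$ of \cite[Theorem~4.1]{DS-treegr}, recorded here purely for later use, so there is nothing to do. For items~(2) and~(3) I would first note that any two maps $X\to P$ satisfying the displacement bound $d(x,\rho_P(x))\le d(x,P)+1$ differ by a uniformly bounded amount --- a consequence of the tree-graded structure, since gates onto pieces are unique in every asymptotic cone --- so it suffices to establish~(2) and~(3) for one convenient coarse nearest-point projection. Both items will then follow from a single statement describing how geodesics of $X$ see a peripheral set.

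Call this statement the \emph{geodesic--projection lemma}: there is $C_0=C_0(X,\calP)$ such that for every $P\in\calP$ and every geodesic $\g$ of $X$ from $x$ to $y$, if $\g$ stays outside $N_{C_0}(P)$ then $\rho_P(\g)$ has diameter at most $C_0$, while if $\g$ meets $N_{C_0}(P)$ then, writing $p$ and $q$ for the first and last points of $\g$ in $N_{C_0}(P)$, one has $d(\rho_P(x),\rho_P(p))\le C_0$, $d(\rho_P(y),\rho_P(q))\le C_0$, and $d(p,P),d(q,P)\le C_0$. This is the technical core and I expect it to be the main obstacle; it is contained in the analysis of geodesics versus saturations in \cite[Sections~8--9]{DS-treegr} and is isolated in the form needed in \cite{Si-proj}. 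If one wanted to reprove it, the natural route is via asymptotic cones: by the asymptotic-cone characterisation of asymptotically tree-graded spaces in \cite{DS-treegr}, every asymptotic cone of $X$ is tree-graded with pieces the ultralimits of suitably rescaled sequences from $\calP$, and there the corresponding statement is immediate --- in a tree-graded space the nearest point of a piece to an external point is unique, the resulting gate map is $1$-Lipschitz, and a geodesic either avoids a piece (so the gates of its endpoints coincide) or meets it exactly in the subarc joining those gates. A failure of the coarse statement along a sequence of configurations with $C_0\to\infty$, each rescaled by a quantity comparable to the diameter of the configuration (chosen so that the finitely many relevant points, together with enough of the peripheral set, stay at bounded rescaled distance from the observation point), would produce in some asymptotic cone a piece and points violating one of these properties, the ultralimit of a near-nearest point being forced to coincide with the corresponding gate.

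Granting the geodesic--projection lemma, items~(2) and~(3) are triangle-inequality bookkeeping. Fix $x,y$, a peripheral $P\in\calP$, and a geodesic $\g$ from $x$ to $y$. If $\g$ avoids $N_{C_0}(P)$ then $d(\rho_P(x),\rho_P(y))\le C_0$. Otherwise let $p,q$ be as above, so that $d(p,\rho_P(p)),d(q,\rho_P(q))\le C_0+1$; combining the three estimates of the lemma with $d(p,q)\le d(x,y)$ gives $d(\rho_P(x),\rho_P(y))\le d(x,y)+4C_0+2$, which is~(2). For~(3), suppose $d(\rho_P(x),\rho_P(y))\ge C$ with $C:=5C_0+2$. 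Then $\g$ cannot avoid $N_{C_0}(P)$, so $p,q$ exist, and running the same inequalities backwards gives $d(p,q)\ge d(\rho_P(x),\rho_P(y))-4C_0-2\ge C-4C_0-2$; since $d(x,p)\ge d(x,P)-d(p,P)\ge d(x,P)-C_0$ and the length of $\g$ is at least $d(x,p)+d(p,q)$, we obtain $d(x,y)\ge d(x,P)-C_0+C-4C_0-2=d(x,P)$, which is~(3). (By symmetry one then also gets $d(x,y)\ge d(y,P)$, but the asymmetric form stated is all that we shall need.)
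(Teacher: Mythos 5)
The paper offers no proof of this lemma beyond citing \cite{DS-treegr} for item (1) and \cite{DS-treegr}, \cite{Si-proj} for (stronger versions of) items (2) and (3), and your proposal ultimately rests on exactly the same sources, the geodesic--projection lemma you isolate being the form available there. Your triangle-inequality deduction of (2) and (3) from that lemma (and the coarse uniqueness of near-projections) checks out, so this is essentially the paper's approach with the bookkeeping made explicit.
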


\section{Geometric separation of fibres in $G$}

In general, when $H\hookrightarrow_h (G,Y)$ the action of $G$ on $Cay(G,Y)$ need not be acylindrical. Osin shows that if one is allowed to change $Y$ then one can make the action acylindrical, but we will not need this fact and we introduce the following definition instead.

\begin{defn}
Let $G$ be a group acting on the metric space $X$ and let $H\subseteq X$ a subspace. We say that the action is \emph{acylindrical along} $H$ if for every $r$ there exists $R$ so that for any $x,y\in H$ with $d(x,y)\geq R$ there are only finitely many $g\in G$ so that $d(x,gx),d(y,gy)\leq r$.
\end{defn}

It has to be mentioned that the weak proper discontinuity property of Bestvina and Fujiwara \cite{BeFu-wpd} is very similar to the acylindricity condition we gave above.

First of all, let us show that the definition applies to the context we are interested in.

\begin{lemma}
 If $\{H_i\}_{i=1,\dots,n}\hookrightarrow_h (G,Y)$ then the action of $G$ on $\Gamma=Cay(G,Y)$ is acylindrical along each $H_i$.
\end{lemma}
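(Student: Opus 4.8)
The plan is to show that the acylindricity condition along $H_i$ follows directly from facts (1) and (3) in Lemma~\ref{rhfacts} applied to the relatively hyperbolic space $\Gamma = Cay(G,Y)$ with peripheral collection $\calP = \{gH_j\}$. Fix $i$, fix $r$, and let $C$ be the constant from Lemma~\ref{rhfacts}(3). We want to produce $R$ so that for $x,y\in H_i$ with $d_\Gamma(x,y)\geq R$ there are only finitely many $g\in G$ with $d_\Gamma(x,gx),d_\Gamma(y,gy)\leq r$. The idea is: if $g$ moves two far-apart points of $H_i$ by at most $r$, then $g$ nearly stabilizes the peripheral set $H_i=eH_i$, and since the peripheral sets that are ``close'' to a long segment of $H_i$ form a finite set (by item (1)), $g$ must in fact send $H_i$ back to $H_i$ up to bounded error, i.e. $g$ lies near the stabilizer of $H_i$ in a controlled way; finiteness then comes from properness of $d_\Gamma|_{H_i}$.

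In more detail, first I would observe that $gH_i$ is again a peripheral subset, and $\rho_{gH_i} = g\circ\rho_{H_i}\circ g^{-1}$ up to bounded error (the closest-point projections are equivariant up to the additive ``$+1$'' fudge). If $d_\Gamma(x,gx)\leq r$ and $d_\Gamma(y,gy)\leq r$ with $x,y\in H_i$, then $\rho_{H_i}(gx)$ and $\rho_{H_i}(gy)$ are within $r$ of $x$ and $y$ respectively, so $d_\Gamma\bigl(\rho_{H_i}(gx),\rho_{H_i}(gy)\bigr)\geq d_\Gamma(x,y)-2r$. Choosing $R$ large (say $R \geq 2r + C + 1$) forces $d_\Gamma\bigl(\rho_{H_i}(gx),\rho_{H_i}(gy)\bigr)\geq C$, and $gx,gy\in gH_i$; so if $gH_i\neq H_i$, Lemma~\ref{rhfacts}(3) would give $d_\Gamma(gx,gy)\geq d_\Gamma(gx,H_i)$, but also $d_\Gamma(gx,gy)=d_\Gamma(x,y)$ is comparable to $d_\Gamma(gx,H_i)\leq r$ — more carefully, I should instead use that $gx$ lies within $r$ of $H_i$ and within $r$ of $gy\in gH_i$, so both $gx$ and $gy$ lie in $N_r(H_i)\cap gH_i$; by Lemma~\ref{rhfacts}(1) with $D=r$ this set has diameter at most some $B=B(r)$, hence $d_\Gamma(x,y)=d_\Gamma(gx,gy)\leq B$. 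So if we additionally take $R > B(r)$, we conclude $gH_i = H_i$, i.e. $g$ lies in the setwise stabilizer of $H_i$, which (since $H_i<G$ is a subgroup) is just $H_i$ itself (or at worst a subgroup commensurable with it — but here $gH_i=H_i$ as a coset means $g\in H_i$).

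Finally, with $g\in H_i$ and $d_\Gamma(x,gx)\leq r$ for some fixed $x\in H_i$, write $g = x^{-1}(gx)$ where both $x$ and $gx$ lie in $H_i$; then $d_\Gamma(e,g)\leq d_\Gamma(e,x)+d_\Gamma(x,gx)$ is bounded (for fixed $x$), so $g$ ranges over $\{h\in H_i : d_\Gamma(e,h)\leq d_\Gamma(e,x)+r\}$, which is finite because $d_\Gamma|_{H_i}$ is proper. This gives the desired $R = R(r)$ and completes the proof.

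The main obstacle I anticipate is bookkeeping the equivariance and the additive constants when passing between $\rho_{H_i}$ and $\rho_{gH_i}$, and making sure the application of Lemma~\ref{rhfacts}(1) is set up with the right $D$; the conceptual content — ``almost-fixing two distant points of a peripheral set forces you into its stabilizer, which is locally finite in $d_\Gamma$'' — is straightforward, but one has to be careful that $x$ itself need not be ``deep'' inside $H_i$, so one may want to first reduce to the case where $x,y$ are far from all \emph{other} peripherals, or simply absorb everything into the diameter bound $B(r)$ from item (1) as sketched above, which avoids that subtlety entirely.
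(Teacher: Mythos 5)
Your final argument is correct and is essentially the paper's proof: applying Lemma \ref{rhfacts}(1) to $N_r(H_i)\cap gH_i$ forces $gH_i=H_i$, hence $g\in H_i$, once $R>B(r)$, and then properness of $d_\Gamma|_{H_i}$ gives finiteness (the first, projection-based route you sketch is unnecessary, as you yourself note). One minor slip: $g\neq x^{-1}(gx)$ and the bound should be, e.g., $d_\Gamma(e,g)\leq 2d_\Gamma(e,x)+r$, or more simply observe that $g=(gx)x^{-1}$ is determined by $gx\in H_i\cap B_r(x)$, which is finite by left-invariance and properness; this does not affect the conclusion.
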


\begin{proof}
This follows easily from the geometric separation of the cosets of $H_i$ in $\Gamma$ (Lemma \ref{rhfacts}-(1)), which implies that if $g$ is as in the definition of acylindricity along $H_i$ and $R$ is large enough, then $g$ actually lies in $H_i$. The conclusion follows from the fact that there are finitely many elements of $H_i$ in any given ball of radius $r$, as the restriction of $d_\Gamma$ to $H_i$ is proper.
\end{proof}

The key fact we use to prove Theorem \ref{main1} is the following.

\begin{lemma}\label{fibgeomsep}
Let $X$ be a metric space and $H\subseteq X$ a subspace.
 Suppose that the group $G$ acts acylindrically on $X$ along $H$ and let $\pi$ be the orbit map of the basepoint $x_0\in X$. Then for each $r$ there exists $R$ so that if $x,y\in X$ satisfy $d(x,y)\geq R$ then $\pi^{-1}(B^X_r(x))$ and $\pi^{-1}(B^X_r(y))$ are geometrically separated in a Cayley graph of $G$.
 \end{lemma}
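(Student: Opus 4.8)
The plan is to work directly from the definition of geometric separation. Fix $r$ and suppose, for contradiction, that no $R$ works; then for every $R$ there are points $x_R, y_R \in X$ with $d_X(x_R, y_R) \geq R$ and a constant $D$ (we can take a single $D$ by passing to a subsequence, since if it fails it fails for some fixed $D$) such that $N_D\bigl(\pi^{-1}(B_r(x_R))\bigr) \cap \pi^{-1}(B_r(y_R))$ has infinite diameter in $Cay(G,Y)$. The first step is to unpack what this says: there are infinitely many group elements $g \in G$ lying in $N_D\bigl(\pi^{-1}(B_r(x_R))\bigr) \cap \pi^{-1}(B_r(y_R))$, so for each such $g$ there is $h = h(g) \in G$ with $d_G(g,h) \leq D$ and $\pi(h) = h x_0 \in B_r(x_R)$, while $\pi(g) = g x_0 \in B_r(y_R)$. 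Since $d_G(g,h) \leq D$, the element $g^{-1}h$ ranges over a fixed ball of radius $D$ in $Cay(G,Y)$, which is finite; so after passing to a subsequence we may assume $g^{-1}h = s$ is a single fixed element. Thus $h = gs$, and we have infinitely many $g$ with $g x_0 \in B_r(y_R)$ and $gs x_0 \in B_r(x_R)$.

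The second step is to convert this into a violation of acylindricity along $H$ — but there is a subtlety: acylindricity is stated for $x,y \in H$, whereas $x_R,y_R$ are arbitrary points of $X$. The fix is to observe that the sets $B_r(x_R)$ and $B_r(y_R)$ are only nonempty-of-interest when they meet $\pi(G) = G x_0$; more precisely, for the preimages to be nonempty there must be some $g_0$ with $g_0 x_0 \in B_r(y_R)$ and some $h_0$ with $h_0 x_0 \in B_r(x_R)$. However $H$ need not contain $x_0$ at all, and the points $x_R, y_R$ need not be near $H$. So the cleaner route is to reformulate directly: among the infinitely many $g$ found above, pick two of them, $g_1$ and $g_2$; then $k = g_2 g_1^{-1}$ satisfies $d_X(k \cdot g_1 x_0, g_2 x_0) = 0$ where both $g_1 x_0, g_2 x_0 \in B_r(y_R)$, so $d_X(g_1 x_0, k\, g_1 x_0) \leq 2r$; similarly using $h_i = g_i s$ we get $d_X(g_1 s x_0, k\, g_1 s x_0) \leq 2r$. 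Set $a = g_1 x_0$ and $b = g_1 s x_0 \in X$; these are two points with $d_X(a,b) = d_X(x_0, s x_0) =: L$ a fixed constant (independent of $R$!), and we have found infinitely many $k$ (as $g_1, g_2$ range over our infinite set) with $d_X(a, k a), d_X(b, k b) \leq 2r$.

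The third step is to derive the contradiction. Acylindricity along $H$ as stated requires $a, b \in H$, which we have not arranged — so the hypothesis as literally stated is too weak, and I expect the actual argument must instead use a strengthening, or else the points $x_R,y_R$ in the statement are implicitly meant to be taken near $H$ (for instance, the application in the paper will apply this with $X = \Gamma$, $H = H_i$, and $x_0, x_R, y_R$ all chosen in $H_i$). Assuming we may take $a, b \in H$ — equivalently, reducing to the case where the relevant orbit points lie in $H$, which costs only a bounded additive error that can be absorbed into $r$ — acylindricity along $H$ gives an $R_0 = R_0(2r)$ such that any two points of $H$ at distance $\geq R_0$ are moved $\leq 2r$ by only finitely many elements. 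If $L \geq R_0$ we are immediately done: we exhibited infinitely many such $k$. If $L < R_0$, we instead need $d_X(a,b)$ to be large, and here is where $d_X(x_R, y_R) \geq R$ must be used: $a = g_1 x_0$ is within $r$ of $y_R$ and $b = h_1 x_0 = g_1 s x_0$ is within $r$ of $x_R$, so $d_X(a,b) \geq R - 2r$, which we can make $\geq R_0$ by choosing $R := R_0 + 2r$ at the outset — contradicting acylindricity. (I erroneously wrote $d_X(a,b) = L$ above; the correct bound is $d_X(a,b) \geq d_X(x_R,y_R) - 2r$, and the element $s$ just relates the $B_r(x_R)$-constraint to the $B_r(y_R)$-constraint.) The main obstacle, then, is the bookkeeping to make sure $a$ and $b$ can be taken in $H$ so that acylindricity along $H$ truly applies; this is where I would expect the proof to spend its care, and it is the one place the argument is genuinely delicate rather than routine.
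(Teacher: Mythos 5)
Your argument is essentially the paper's: the paper calls $a^{-1}b$ a ``bridge'' and observes that two anchor points $a_1,a_2$ of the same bridge differ by an element displacing two points (namely $x_0$ and $gx_0$, at distance $\geq R-2r$) by at most $2r$, which is exactly your element $k=g_2g_1^{-1}$ displacing $a$ and $b$; local finiteness of the Cayley graph then converts this finiteness into geometric separation, just as in your write-up. Concerning the issue you flag: the statement is indeed meant with $x,y\in H$ (compare the Section 5 definition marked ``cfr.\ Lemma \ref{fibgeomsep}'', and every application takes $x,y\in H$), and the clean repair is not to force the orbit points $a,b$ into $H$ but to transfer the displacement bound to $x,y$ themselves by the triangle inequality --- $k$ moves $x$ and $y$ by at most $4r$ --- and then apply acylindricity along $H$ with constant $4r$; note that the paper's own proof is loose in exactly the same way, since it invokes the acylindricity constant (quantified over pairs in $H$) at the points $x_0,gx_0$, so your proposal together with this one-line fix matches the intended argument.
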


\begin{proof}
In this proof we denote $d_X$ by $d$, and similarly balls are taken in $X$ unless otherwise stated. A word metric on $G$ will be denote d $d_G$.

Given $r$, choose $R$ so that there are finitely many elements $h\in G$ so that $d(x,hx), d(y,hy)\leq 2r$ whenever $d(x,y)\geq R-2r$ and $x,y\in H$.

 In the notation of the lemma, we say that $g\in G$ is a \emph{bridge} with \emph{anchor point} $a\in G$ if $\pi(a)\in B_r(x)$, $\pi(ag)\in B_r(y)$.

 \begin{figure}[h]
  \includegraphics{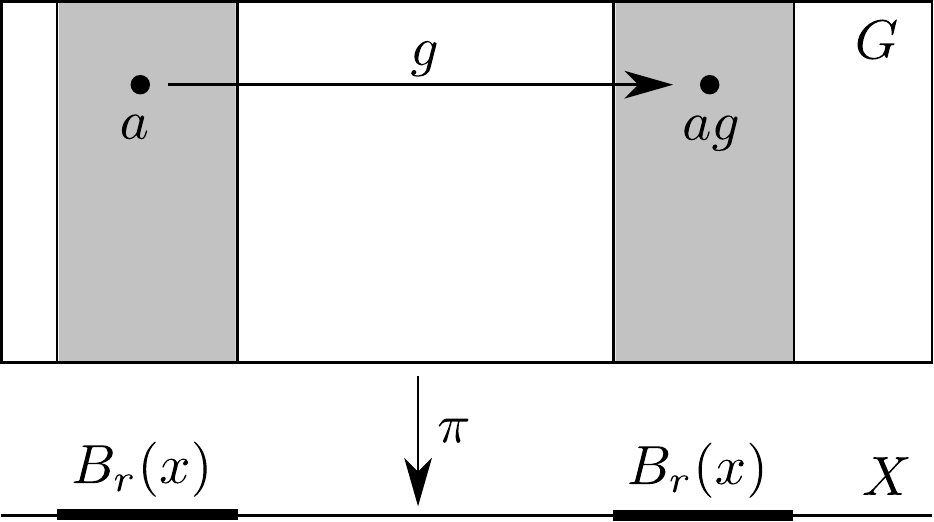}
  \caption{$g$ is a bridge with anchor point $a$.}
\end{figure}
 
 We claim that for any $g\in G$ there are finitely many elements $a$ so that $g$ is a bridge with anchor point $a$. As there are finitely many elements in a given ball of radius, say, $D$ in a given Cayley graph of $G$, assuming the claim we conclude that for each $D$ we can find $K$ so that if $\pi(a)\in B_r(x)$, $\pi(b)\in B_r(y)$ (so that $a^{-1}b$ is a bridge) and $d_G(a,1),d_G(1,b)>K$ then $d(a,b)>D$.
This is the same as saying that $\pi^{-1}(B_r(x))$ and $\pi^{-1}(B_r(y))$ are geometrically separated in a Cayley graph of $G$.
 
 We are left to prove the claim. Suppose that $a_1$ and $a_2$ are anchor points for $g\in G$. Then $d(a_1^{-1}a_2x_0, x_0)\leq 2r$, as $\pi(a_i)\in B_r(x)$ means by definition that $a_ix_0$ is in $B_r(x)$. Similarly
 $$d(a_1^{-1}a_2 (g x_0),gx_0)=d(a_2g x_0, a_1g x_0)\leq 2r,$$
as $a_igx_0\in B_r(y)$.
 As $d(x_0, gx_0)=d(a_1x_0,a_1gx_0)\geq R-2r$, we see that $a_1^{-1}a_2$ is one of finitely many possible elements by our choice of $R$. This proves the claim. 
\end{proof}


The statement we actually need is the following.

\begin{lemma}\label{propersep}
  Suppose that the group $G$ acts acylindrically on the metric space $X$ and let $\pi$ be the orbit map of the basepoint $x_0\in X$. Let $H$ be a subset of $G$ so that $\pi|_H$ is proper. Then for each $x\in H$ and $r\geq 0$ there exists $R$ so that if $y\in H$ satisfies $d(x,y)\geq R$ then the following holds. For each $D$ there exists $B$ \emph{not depending on $y$} so that $N^G_D(\pi^{-1}(B^X_r(x)))\cap \pi^{-1}(B^X_r(y))\subseteq N_B(H)$.
\end{lemma}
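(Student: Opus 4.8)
The plan is to derive Lemma~\ref{propersep} from Lemma~\ref{fibgeomsep}, the extra input being the simple observation that for a fixed $D$ only boundedly many $y$ can matter. As in the statement I identify $x,y\in H$ with their $\pi$-images $xx_0,yx_0\in X$; let $L$ be the maximal displacement $d_X(sx_0,x_0)$ over a fixed finite generating set of $G$, so that $\pi\colon(G,d_G)\to(X,d_X)$ is $L$-Lipschitz, and fix $x\in H$ and $r$. First I choose $R$ so that whenever $u,v\in X$ satisfy $d_X(u,v)\geq R$ the sets $\pi^{-1}(B^X_r(u))$ and $\pi^{-1}(B^X_r(v))$ are geometrically separated in a Cayley graph of $G$: this is exactly Lemma~\ref{fibgeomsep}, applied with the subspace taken to be all of $X$ (so ``acylindrical along $X$'' is just full acylindricity and $x_0\in X$).

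Next fix $D$ and $y\in H$ with $d_X(x,y)\geq R$, and write $W_y=N^G_D(\pi^{-1}(B^X_r(x)))\cap\pi^{-1}(B^X_r(y))$. I claim $W_y$ is empty unless $y$ lies in a fixed finite set. Indeed, if $b\in W_y$ and $a\in\pi^{-1}(B^X_r(x))$ satisfies $d_G(a,b)\leq D$, then
\[
d_X(\pi(y),\pi(x))\leq d_X(\pi(y),\pi(b))+d_X(\pi(b),\pi(a))+d_X(\pi(a),\pi(x))\leq r+LD+r ,
\]
so $y$ belongs to $\mathcal{Y}:=\{\,h\in H:\ d_X(\pi(h),\pi(x))\leq 2r+LD\,\}$. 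Since $\pi|_H$ is proper, $\mathcal{Y}$ is the preimage under $\pi|_H$ of a bounded subset of $X$, hence a bounded subset of $G$, hence finite as $G$ is finitely generated; note $x\in\mathcal{Y}$ and that $\mathcal{Y}$ depends only on $D,x,r$.

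Now for each of the finitely many $y\in\mathcal{Y}$ with $d_X(x,y)\geq R$, geometric separation (our choice of $R$) says $W_y$ has finite $d_G$-diameter, hence is a finite subset of $G$; thus $B_y:=\max_{b\in W_y}d_G(b,H)$ is a finite number (set $B_y=0$ if $W_y=\emptyset$), and $W_y\subseteq N_{B_y}(H)$. Setting $B:=\max\{\,B_y:\ y\in\mathcal{Y},\ d_X(x,y)\geq R\,\}$, a finite maximum over a finite set and so a finite quantity depending only on $D,x,r$, we conclude: for any $y\in H$ with $d_X(x,y)\geq R$, either $y\notin\mathcal{Y}$ and $W_y=\emptyset\subseteq N_B(H)$, or $y\in\mathcal{Y}$ and $W_y\subseteq N_{B_y}(H)\subseteq N_B(H)$.

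The point requiring care — and the reason the naive attempt to bound $d_G(b,H)$ directly via $d_G(b,x)$ fails — is that the action is not assumed proper, so the inequality $d_X(\pi(b),\pi(x))\leq r+LD$ says nothing about $d_G(b,x)$; the argument sidesteps this by using $\pi$-distances only to pin $y$ down to a finite set, relying otherwise entirely on the boundedness provided by Lemma~\ref{fibgeomsep}. Beyond that, the proof is the bookkeeping above, the main thing to verify being that $R$, $\mathcal{Y}$ and hence $B$ are indeed independent of the particular $y$.
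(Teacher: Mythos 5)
Your proof is correct, and it shares the overall skeleton of the paper's argument --- the dichotomy between $y$ far from $x$ in $X$ (where the intersection is empty by the Lipschitz bound $d_X(\pi(x),\pi(y))\leq 2r+LD$) and $y$ close to $x$ (where one reduces to finitely many cases, applies Lemma~\ref{fibgeomsep}, and takes a maximum to get a $B$ independent of $y$) --- but the finiteness step is obtained by a genuinely different mechanism. The paper does not use properness of $\pi|_H$ at this point: it covers the ball $W=B^X_{MD+2r}(x)$ by finitely many balls of radius $2r$, invoking ``compactness'' of $W$, so that every $B^X_r(y)$ with $y\in W$ lies in one of a fixed finite family of $2r$-balls, and then applies Lemma~\ref{fibgeomsep} (with $r$ replaced by $2r$) only to those finitely many centres; this buys a bound uniform over \emph{all} $y\in W$, not just $y\in H$, but it implicitly needs balls of $X$ to be totally bounded, which is delicate when $X=Cay(G,Y)$ with $Y$ infinite. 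You instead use the stated hypothesis that $\pi|_H$ is proper to pin the relevant $y$ down to the finite set $\mathcal{Y}\subseteq H$, and apply Lemma~\ref{fibgeomsep} at radius $r$ to each such $y$ separately; this is a legitimate and arguably cleaner route that requires no properness of $X$, at the cost of only covering $y\in H$ --- which is all the statement asks for. One minor remark: you read the hypothesis literally as full acylindricity and take the subspace in Lemma~\ref{fibgeomsep} to be all of $X$; since the only pairs you feed into that lemma are $\pi(x),\pi(y)$ with $x,y\in H$, your argument goes through verbatim under the weaker hypothesis of acylindricity along $\pi(H)$, which is what is actually available in the paper's application of this lemma.
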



\begin{proof}
We can choose $R$ as in Lemma \ref{fibgeomsep} with $r$ replaced by $2r$. Let $M$ be a Lipschitz constant for $\pi$. For a fixed $D$, by compactness of $W=B^X_{MD+2r}(x)$ it is easy to see that we can find $B$ so that the required property holds for each $y\in W$, as one can find a finite family of balls of radius $2r$ so that any ball of radius $r$ with centre in $W$ is contained in a ball in the family. On the other hand, if $y\notin W$, then there cannot be $g,h$ with $\pi(g)\in B^X_r(x),\pi(h)\in B^X_r(y)$ and $d_G(x,y)\leq D$.
\end{proof}

\section{Superlinear divergence}

We fix the notation of the theorem and denote throughout the section $X=Cay(G,Y)$ for $Y\subseteq G$ so that $\{H_i\}\hookrightarrow_h (G,Y)$. To slightly simplify the notation, we denote $H=H_1$.
Also, for $h\in H$ and $r_1\leq r_2$ we set $A(h,r_1,r_2)=H\cap (B^X_{r_2}(h)\backslash \mathring{B}^X_{r_1}(g))$ and $S(h,r_1)=A(h,r_1,r_1)$.

All paths we consider are discrete, meaning that their domain is an interval in $\mathbb Z$. The length of the path $\alpha$ with domain $[m,n]$ is just
$$l(\alpha)=\sum_{m\leq i<n} d(\alpha(i),\alpha(i+1)).$$

From now on, $d$ denotes $d_X$, while we always use the subscript $d_G$ when referring to a fixed word metric on $G$.

\subsection{The idea of the proof}

Let us illustrate the idea of the proof in the case when $H$ is cyclic.
Suppose we have a Lipschitz path $\alpha$ in $G$ connecting points in $H$. Our aim is to show that $\alpha$ is very long if it strays far from $H$. Let us regard $\alpha$ as  a path in $X$. We fix sufficiently far away ``checkpoints'' along $H$ between the endpoints of $\alpha$ and ask ourselves whether $\alpha$ intersects balls of sufficiently large radius around the checkpoints.
Avoiding one such ball ``costs'' a lot of length, because $X$ is hyperbolic, so a subpath of $\alpha$ that avoids a checkpoint ball is very long. In particular, if there are many checkpoint balls that are not intersected by $\alpha$, then $\alpha$ is very long, as required. Otherwise, there are many consecutive pairs of checkpoint balls both intersected by $\alpha$. In this case we will use geometric separation with respect to $d_G$ to say that points in the intersection of the given balls and $\alpha$ are far away in $G$, and in particular a subpath of $\alpha$ connecting them has to be very long.

\begin{figure}[h]
 \includegraphics{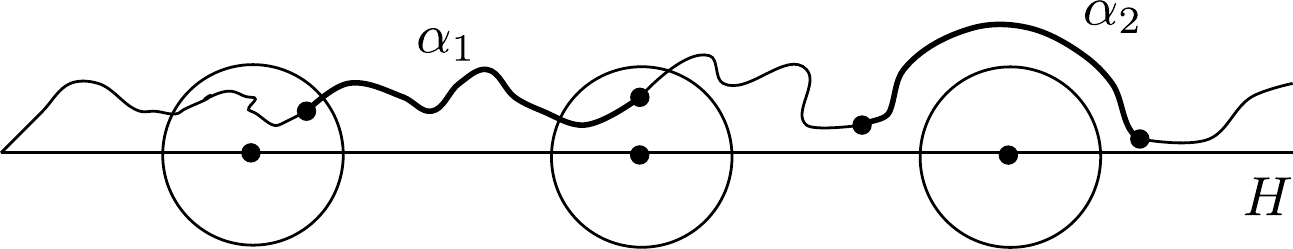}
 \caption{$\alpha_1$ is long because of geometric separation, while $\alpha_2$ is long because it is a detour in a hyperbolic space.}
\end{figure}

Some care has to be taken in choosing the constants involved in the construction, and actually we will be forced to consider sufficiently far apart balls instead of consecutive balls.
Also, in the general case we replace checkpoints by annuli. The way we ``keep track of $\alpha$'' along $H$ is via a closest point projection to $H$.

\subsection{The proof}

The following is a form of superlinear divergence for $H$.

\begin{prop}\label{suplin}
 For each $L$ there exists $K$ with the following property. Let $\alpha$ be a 1-Lipschitz path in $G$ from $h_1\in H$ to $h_2\in H$ that avoids $N^G_K(A(h_1,r_1,r_2))$, where $0<r_1<r_2<d(h_1,h_2)$. Then
 $$l(\alpha)\geq L(r_2-r_1).$$
\end{prop}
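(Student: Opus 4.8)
The plan is to follow the "idea of the proof" sketch: lay down finitely many annuli as checkpoints along $H$ between $h_1$ and $h_2$, distinguish checkpoints hit by $\alpha$ from those avoided, and extract a superlinear lower bound on $l(\alpha)$ from whichever population is large. Set $r = r_2 - r_1$ and, after reducing to the case where $r$ is large (for small $r$ the constant $K$ can simply be taken bigger than $r_2-r_1$ so the hypothesis is vacuous once $L(r_2-r_1)$ is bounded), partition the shell $A(h_1,r_1,r_2)$ radially into $N \asymp r$ concentric annuli $A_j = A(h_1, s_{j-1}, s_j)$ of bounded width with $s_0 = r_1 < s_1 < \dots < s_N = r_2$. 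Regard $\alpha$ as a path in the hyperbolic space $X$ running from $h_1$ (inside the innermost sphere) to $h_2$ (outside the outermost sphere), so for each $j$ the path $\alpha$, viewed in $X$, must cross the sphere $S(h_1, s_j)$; fix a crossing point $p_j \in \alpha$ with $d(h_1, p_j)$ close to $s_j$.

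Next I would dichotomize each index $j$. Call $j$ \emph{good} if $\rho_H(p_j)$ (a closest-point projection of $p_j$ to $H$) lies in the annulus $A_j$ — equivalently, the point of $\alpha$ crossing sphere $j$ projects near radius $s_j$ on $H$ — and \emph{bad} otherwise. If $j$ is bad, then $p_j$ is a point of $\alpha$ that is far (in $X$) from its own checkpoint annulus while still at $X$-distance $\approx s_j$ from $h_1$; hyperbolicity of $X$ then forces the subpath of $\alpha$ realizing this crossing to make a detour of length comparable to the width of the shell it is skirting, and summing over the bad $j$'s gives $l_X(\alpha) \gtrsim (\#\text{bad})\cdot(\text{width})$, hence $l(\alpha) \gtrsim (\#\text{bad}) \cdot r / N \asymp (\#\text{bad})$ after accounting for the fact that $\alpha$ is $1$-Lipschitz into $G$ and $\pi$ (inclusion $G\hookrightarrow X$) is Lipschitz. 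If instead $j$ is good, then the actual point $\alpha(t_j)\in G$ mapping to $p_j$ projects into $A_j \subseteq A(h_1,r_1,r_2)$; since $\alpha$ avoids $N_K^G(A(h_1,r_1,r_2))$, the point $\alpha(t_j)$ is $d_G$-far from $A_j$, and for two distinct good indices $j<k$ the corresponding points $\alpha(t_j), \alpha(t_k)$ both lie in the $X$-ball of radius $r$ around $h_1$ but project $d_G$-far onto $H$; geometric separation of fibres (Lemma \ref{fibgeomsep}, applied to the acylindrical action hidden in $\{H_i\}\hookrightarrow_h(G,Y)$, giving geometric separation of preimages of bounded balls around points of $H$) then yields $d_G(\alpha(t_j),\alpha(t_k))$ large once the $X$-distance between the projections is large enough — so consecutive good crossings (or suitably spaced ones) are separated in $d_G$, and the $1$-Lipschitz subpath of $\alpha$ between them has length at least that separation. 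Summing over a fixed positive fraction of good indices again produces $l(\alpha)\gtrsim \#\text{good}$.

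Since $\#\text{good} + \#\text{bad} = N \asymp r = r_2 - r_1$, in either case $l(\alpha) \gtrsim r_2 - r_1$, and choosing the annulus width and the constant $K$ appropriately (both depending on $L$ through the hyperbolicity constant of $X$, the separation function from Lemma \ref{fibgeomsep}, and the Lipschitz constant of the inclusion) gives $l(\alpha)\geq L(r_2-r_1)$. The main obstacle I expect is the good-index case: to invoke geometric separation one needs the closest-point projections $\rho_H(p_j)$ of \emph{distinct} good crossings to be far apart in $X$, not just each inside its own annulus, since different good $j$'s could project near the same radius from opposite sides; this is handled by passing from "consecutive" to "sufficiently far apart" annuli — exactly the complication flagged in the proof sketch — so that the radial gap between the projections of two retained good indices exceeds the threshold in Lemma \ref{fibgeomsep}, at the cost of only a bounded multiplicative loss in the count. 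A secondary technical point is bookkeeping the two competing metrics: all the "long detour" estimates happen in $X$ while the final length bound and the geometric-separation input are in $d_G$, so one must consistently push lengths back and forth through the (bi-)Lipschitz relationship between $d_G$-length of a discrete $1$-Lipschitz path and the $X$-length of its image.
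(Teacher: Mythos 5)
Your overall architecture (checkpoints along $H$, a dichotomy between checkpoints avoided and checkpoints hit, a detour estimate for the former, geometric separation for the latter, and spacing the retained indices far apart) is the paper's, but the dichotomy you chose does not make either half of the argument work. The paper's dichotomy is whether $\alpha$ meets $T_i=N_{C_2}(S(h_1,r_i))$, the \emph{$X$-neighbourhood of the checkpoint set $S(h_1,r_i)\subseteq H$}; yours is whether the point $p_j$ where $\alpha$ crosses the metric sphere of radius $s_j$ about $h_1$ has $\rho_H(p_j)\in A_j$. In your ``good'' case this creates a genuine gap: to apply Lemma \ref{fibgeomsep} (or rather Lemma \ref{propersep}) you must place $\alpha(t_j)$ inside a fibre $\pi^{-1}(B^X_r(x))$ with $x\in H$, i.e.\ you need $\alpha(t_j)$ to be \emph{$X$-close to $H$}. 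A point at $X$-distance $\approx s_j$ from $h_1$ whose projection happens to land in $A_j$ can be arbitrarily far from $H$ in $X$, so the separation lemma simply does not apply to it. In the paper this is automatic because the selected points lie in $T_i$, hence within $C_2$ of $H$ in $X$. Moreover, plain geometric separation (bounded diameter of the intersection) is not what yields the contradiction: one needs the refined statement of Lemma \ref{propersep}, that $N^G_D(\pi^{-1}(B^X_r(x)))\cap\pi^{-1}(B^X_r(y))\subseteq N_B(H)$ with $B$ independent of $y$, so that a short $d_G$-jump between two hit checkpoints would force a point of $\alpha$ into a bounded $d_G$-neighbourhood of $A(h_1,r_1,r_2)$, contradicting the avoidance of $N^G_K(A(h_1,r_1,r_2))$ for $K$ large. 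Your sketch never makes this bridge to the hypothesis.

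The ``bad'' case has a symmetric problem: badness of an index $j$ in your sense does not mean that $\alpha$ avoids the checkpoint $N_{C_2}(S(h_1,s_j))$ --- the hypothesis only keeps $\alpha$ away from the annulus in $d_G$, not in $d_X$, so some other portion of $\alpha$ may pass right next to $S(h_1,s_j)$, and then no detour is forced at that index by the single mismatched crossing point. The detour estimate also does not come from ``hyperbolicity of $X$'': $X=Cay(G,Y)$ is only hyperbolic \emph{relative to} the cosets of the $H_i$, and the paper's internal lemma instead uses the coarse Lipschitz and contraction properties of the closest-point projection $\rho$ to $H$ (Lemma \ref{rhfacts}(2),(3)) applied to a subpath whose endpoint projections straddle the checkpoint radius while the subpath avoids $N_{C_2}(S(h_1,r_i))$. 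Your closing remark correctly anticipates the need to space checkpoints by the separation constant $R$ (the paper's counting modulo $R$), but the obstacle you flag is not the main one; the missing idea is to define the dichotomy by intersection with the $X$-neighbourhoods $T_i$ of subsets of $H$, which simultaneously guarantees membership in the fibres for the separation step and the avoidance needed for the detour step.
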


We now deduce Theorem \ref{main} from the proposition. If $H$ was cyclic, the conclusion would follow more quickly from results in \cite{DMS-div}.

\emph{Proof of Theorem \ref{main}.}
 Consider a $(\mu',c')$-quasi-geodesic $\beta'$ in $G$ from $1$ to $h\in H$. For some $\mu=\mu(\mu',c'),c=c(\mu',c')$ there exists a 1-Lipschitz path $\beta:(I\subseteq \Z)\to G$ with endpoints $1,h$, whose image is within Hausdorff distance $c$ from the image of $\beta'$ and so that for each $x,y\in I$ we have
$$l(\beta|_{[x,y]})\leq \mu d(\beta(x),\beta(y))+c.\ \ \ \ \ (1)$$

Let $K$ be given by the proposition setting $L=\mu+1$. Suppose that $\beta(I)$ is not contained in $N^G_K(H)$, for otherwise we are done, and let $x,y\in I$ be so that $\beta|_{[x,y]}$ intersects $N^G_K(H)$ only at its endpoints. Our goal is now to give a bound $B$ on $l(\beta|_{[x,y]})$ depending on $\mu,c,H$, which in turn gives us $\beta(I)\subseteq N^G_{K+B}(H)$, as required.

Consider the path $\alpha$ obtained concatenating (in the appropriate order) $\beta|_{[x,y]}$ and geodesics of length at most $K$ connecting $x$, $y$ to some $h_1,h_2\in H$ respectively. We can assume $d(h_1,h_2)>2K$, for otherwise it is easy to find a bound on $d(\beta(x),\beta(y))$ and hence, using (1), on $l(\beta|_{[x,y]})$. We can set $r_1=K$, $r_2=d(h_1,h_2)-K$ and apply the proposition to get
$$l(\beta|_{[x,y]})\geq l(\alpha)-2K\geq Ld(h_1,h_2)-2KL-2K\geq$$
$$(\mu+1)d(\beta(x),\beta(y))-2K(\mu+1)-2KL-2K.$$
This contradicts (1) if $d(\beta(x),\beta(y))$ is sufficiently large. So, we get a bound on $d(\beta(x),\beta(y))$ and in turn a bound on $l(\beta|_{[x,y]})$ again using (1).\qed

\medskip

We are left to prove the proposition.

\medskip

\emph{Proof of Proposition \ref{suplin}.} Fix from now on $L\geq 0$.
Let $\rho:X\to H$ be a map satisfying, for each $x\in X$, $d(x,\rho(x))=d(x,H).$
(The minimum exists because $d|_H$ is proper.)

By Lemma \ref{rhfacts}-(2), the map $\rho$ is coarsely Lipschitz, meaning that it satisfies $d(\rho(x),\rho(y))\leq C_1 d(x,y)+C_1$ for a suitable constant $C_1$.

\begin{lemma}
 There exists $C_2\geq 10 C_1+1$ with the following property. If $\beta$ is a 1-Lipschitz path in $X$ from $x$ to $y$ and there exist $h\in H, r>0$ so that $N_{C_2}(S(x,r))\cap \beta=\emptyset$, $d(h,\rho(x))<r-C_2$ and $d(h,\rho(y))>r+C_2$ then $l(\beta)\geq 10^3 C_2 L$.
\end{lemma}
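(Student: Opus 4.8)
The plan is to read the hypotheses through the closest-point projection $\rho:X\to H$ already fixed (with coarse-Lipschitz constant $C_1$), to let $C$ be the constant of Lemma~\ref{rhfacts}(3) for the peripheral $H$ — enlarging it, assume $C\ge 2C_1$ — and to postpone the choice of $C_2$ to the very end: it will be some large constant, at least $10C_1+1$ and at least a fixed multiple of $CL$, so in particular $C_2\gg C\ge 2C_1$.

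First I would reformulate the hypotheses. Write $\beta$ as a vertex path $x=\beta(0),\dots,\beta(N)=y$. Since $\beta$ is $1$-Lipschitz and $\rho$ is $C_1$-coarsely Lipschitz, $i\mapsto\rho(\beta(i))$ moves at most $2C_1$ per step, so $\varphi(i):=d(h,\rho(\beta(i)))$ changes by at most $2C_1$ at each step while $\varphi(0)<r-C_2$ and $\varphi(N)>r+C_2$. In particular $d(\rho(x),\rho(y))\ge\varphi(N)-\varphi(0)>2C_2>C$, and Lemma~\ref{rhfacts}(3) applied along $\beta$ gives the estimate I shall keep using: if $i<j$ and $d(\rho(\beta(i)),\rho(\beta(j)))\ge C$, then $l(\beta|_{[i,j]})\ge d(\beta(i),\beta(j))\ge d(\beta(i),H)$.

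Next I would lay down checkpoints. For $0\le m\le k:=\lfloor C_2/C\rfloor$ let $t_m$ be the first index with $\varphi(t_m)\ge (r-C_2)+2mC$; as $\varphi$ moves in jumps smaller than $2C$ and eventually passes $r+C_2$, these are well defined with $t_0<\dots<t_k$ and $\varphi(t_m)\in\bigl[(r-C_2)+2mC,\ (r-C_2)+2mC+2C_1\bigr]$, so $d(\rho(\beta(t_m)),\rho(\beta(t_{m+1})))\ge 2C-2C_1\ge C$. Summing the estimate above over consecutive checkpoints, $l(\beta)\ge\sum_{m=0}^{k-1}d(\beta(t_m),H)$, so the whole proof reduces to showing that each checkpoint is far from $H$, say $d(\beta(t_m),H)\ge C_2/2$: this would give $l(\beta)\ge k\cdot C_2/2\gtrsim C_2^{2}/C$, which exceeds $10^{3}C_2L$ once $C_2$ is chosen as a large enough multiple of $CL$, while the requirement $C_2\ge 10C_1+1$ is compatible with that choice.

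The step I expect to be the main obstacle is exactly this last claim, where the avoidance hypothesis $N_{C_2}(S(x,r))\cap\beta=\emptyset$ must be combined with the relatively hyperbolic geometry of $X$, not merely with the coarse-Lipschitzness of $\rho$. Assume $d(\beta(t_m),H)<C_2/2$ and set $w=\rho(\beta(t_m))\in H$, so $d(\beta(t_m),w)<C_2/2$ and hence $d(w,S(x,r))>C_2/2$; I must contradict $\varphi(t_m)=d(h,w)\in[r-C_2,r+C_2]$. For this one needs that $\rho(x)$ is a coarse gate for $H$ seen from $x$ — that on $H$ the function $v\mapsto d(x,v)$ coincides with $d(x,H)+d(\rho(x),v)$ up to a uniform additive error, a tree-graded feature extracted from Lemma~\ref{rhfacts} — which, together with $d(h,\rho(x))<r-C_2$, identifies $S(x,r)$ coarsely with a level set of $d(\rho(x),\cdot)$ on $H$ and forces every $w\in H$ with $d(h,w)\in[r-C_2,r+C_2]$ to lie within a distance bounded in terms of $C$ and $C_1$ alone of $S(x,r)$ — contradicting $d(w,S(x,r))>C_2/2$ for $C_2$ large. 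The real work is to make this comparison uniform over the $k\asymp C_2/C$ checkpoints while keeping the error term independent of $C_2$, so that inflating $C_2$ at the end genuinely dominates $10^{3}C_2L$.
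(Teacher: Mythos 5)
Your skeleton coincides with the paper's: checkpoints $t_m$ along $\beta$ at which the radial coordinate $\varphi=d(h,\rho(\beta(\cdot)))$ has advanced by a definite amount, Lemma \ref{rhfacts}(3) to turn $d(\rho(\beta(t_m)),\rho(\beta(t_{m+1})))\ge C$ into $l(\beta|_{[t_m,t_{m+1}]})\ge d(\beta(t_m),H)$, a count of $\asymp C_2/C$ checkpoints, and a final choice of $C_2$ as a large multiple of $CL$. But the step you yourself label ``the real work'' is exactly where the proof is missing, and the claim you propose to close it with is false. You spread the checkpoints over the whole annulus $\{w\in H:\ d(h,w)\in[r-C_2,r+C_2]\}$ and then need every such $w$ to lie within a distance bounded in terms of $C,C_1$ alone of $S(x,r)$. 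No bound independent of $C_2$ can hold: by the triangle inequality a point $w$ with $d(h,w)=r+C_2$ is at distance at least roughly $C_2$ from any point whose distance to the relevant centre is $r$ (this is unaffected by inserting your gate property and $d(h,\rho(x))<r-C_2$ to pass between $d(h,\cdot)$ and $d(x,\cdot)$, which only costs a bounded additive error). So for checkpoints near the inner or outer edge of the annulus you obtain no contradiction with $d(w,S(x,r))>C_2/2$, and the crucial estimate $d(\beta(t_m),H)\ge C_2/2$ is not established. In addition, the ``coarse gate'' identity $d(x,v)\approx d(x,H)+d(\rho(x),v)$ for $v\in H$ is not among the three facts of Lemma \ref{rhfacts}; fact (3) only yields the one-sided inequality $d(x,v)\ge d(x,H)$, so you are importing an unproved extra input precisely at the critical point.

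The paper sidesteps all of this by choosing the checkpoints differently: it takes only points $p_i$ on $\beta$ whose \emph{projections} lie in $N_{C_2/2}(S(x,r))$, i.e.\ it works only in the middle band of the crossing; since consecutive projections move by at most $2C_1$ per step, this band still yields $n\ge C_2/D+1$ checkpoints with $d(\rho(p_i),\rho(p_{i+1}))\ge C$. For such checkpoints the avoidance hypothesis does all the work in one line: $d(p_i,S(x,r))>C_2$ together with $d(\rho(p_i),S(x,r))\le C_2/2$ gives $d(p_i,H)=d(p_i,\rho(p_i))\ge C_2/2$ by the triangle inequality, with no gate property and no comparison of spheres about $h$ with spheres about $x$ (note that in the paper's application the avoided set is the sphere in $H$ used for the radial coordinate, cf.\ $T_i=N_{C_2}(S(h_1,r_i))$, so no such comparison is ever needed). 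Your argument becomes correct if you discard all checkpoints whose projections do not lie in $N_{C_2/2}(S(x,r))$ --- enough of them survive --- and replace the false uniform-proximity claim by this triangle-inequality observation; the remaining bookkeeping ($l(\beta)\ge (C_2/D)\cdot(C_2/2)$, then choose $C_2\gtrsim DL$) is the same in both arguments.
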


\begin{proof}
 Lemma \ref{rhfacts}-(3) says that there exists $C$ so that if $d(\rho(x),\rho(y))\geq C$ then $d(x,y)\geq d(x,H)$. 
 Now, if we choose any $C_2\geq 100 C_1+100C$, using the fact that $\rho$ is coarsely Lipschitz we see that there exist points $p_1,\dots,p_n$ on $\beta$ (appearing in the given order) and a constant $D=D(C,C_1)$ so that
 \begin{enumerate}
  \item $n\geq C_2/D + 1$,
  \item $d(\rho(p_i),\rho(p_{i+1}))\geq C$,
  \item $\rho(p_i)\in  N_{C_2/2}(S(x,r))$.
 \end{enumerate}

As $N_{C_2}(S(x,r))\cap \beta=\emptyset$, from $3)$ we see $d(p_i, H)\geq C_2/2$. Then, using 2) we have $d(p_i,p_{i+1})\geq C_2/2$. Finally, the length of $\beta$ is at least $\sum d(p_i,p_{i+1})$ and hence at least $C_2^2/(2D)$. To conclude the proof, we are only left to pick $C_2$ large enough.
\end{proof}

Fix $C_2$ as in the lemma, and assume $r_2-r_1\geq \max\{10^4 C_2 R\}$, where $R\geq 1$ is an integer as in Lemma \ref{propersep} with $r=C_2$. We can make this assumption because the distance between $h_1,h_2$ is always at least $K$, so that $l(\beta)\geq K$, and hence we can use $K$ large enough so that the proposition holds for $r_2-r_1\leq \max\{10^4 C_2 R\}$.

Consider radii $\{r_i\}_{i=1,\dots,n}$ with the following properties.
\begin{enumerate}
 \item $r_i\in [r_1,r_2]$,
 \item $r_{i+1}-r_i\geq 3C_2$,
 \item $n\geq (r_2-r_1)/(10C_2)$,
\end{enumerate}

Consider the sets $T_i=N_{C_2}(S(h_1,r_i))$. If $\alpha\cap T_i=\emptyset$ then we can take a subpath $\alpha_i$ of $\alpha$ so that the lemma applies to $\beta=\alpha_i$ (except if $i=1$ or $i=n$). All such subpaths can be chosen to be disjoint because of condition 2). In particular, if the set of $i$'s so that $\alpha\cap T_i=\emptyset$ has cardinality at least $(r_2-r_1)/(100C_2)$ then the length of $\beta$ is larger than $(r_2-r_1)L$, as required.
Suppose that this is not the case. Then subdividing the integers up to $n$ according to their remainder modulo $R$ and using a simple counting argument gives that there exists $s$ so that there are at least $(r_2-r_1)/(100C_2R)$ indices $i$ so that $\beta\cap T_{iR+s},\beta\cap T_{(i+1)R+s}\neq\emptyset$. In such case pick any $p_i\in \beta\cap T_{iR+s}, p_{i+1}\in \beta\cap T_{(i+1)R+s}$.
By Lemma \ref{propersep} with $x=\rho(p_i)$ and $y=\rho(p_{i+1})$, if we take $K$ large enough we get $d(p_i,p_{i+1})\geq 100C_2 RL$.
More in detail, in our case $\pi$ is the identity map, $r=C_2$, $H$ is our given hyperbolically embedded subgroup, $x=\rho(p_i)$, $y=\rho(p_{i+1})$ and $D=100C_2 RL$. Notice that $d(x,y)\geq 3C_2R-2C_2\geq R$, and we have $p_i\in \pi^{-1}(B_{C_2}(x))$, $p_{i+1}\in \pi^{-1}(B_{C_2}(y))$.
Also, the constant $B$ from Lemma \ref{propersep} can be chose to be independent of $x$ because we can use the action of $H$ to assume $x=1$. Now, if $d(p_i,p_{i+1})$ was at most $100C_2 RL$ then $p_i$ would be at distance at most $B$ from a point $h\in H$.
Such $h$ would be at distance at most $B+C_2$ from $\rho(p_i)$ in $X$, which would in turn imply, as distance measured between points in $H$ are comparable when measured in $X$ or in $G$, that we 
can uniformly bound $d_G(p_i,\rho(p_i))$. However, $\rho(p_i)\in A(h_1,r_1,r_2)$, so we get a contradiction with the hypothesis that $\beta$ avoids $N^G_K(A(h_1,r_1,r_2))$ if $K$ is large enough.

Clearly, $\sum d(p_i,p_{i+1})$ is a lower bound for $l(\alpha)$, and hence $l(\alpha)\geq L(r_2-r_1)$, as required.\qed

\section{Towards hyperbolically embedded metric spaces?}\label{specs}
As mentioned in the introduction, in this section we give a tentative definition of what it means for a collection of subspaces of a metric space to be hyperbolically embedded. First, we give a preliminary definition.

\begin{defn}(cfr. Lemma \ref{fibgeomsep})
 Let $X,Y$ be metric spaces, $H\subseteq Y$ and let $\pi:X\to Y$ be a map. We say that $\pi$ is acylindrical along $H$ is for each $r$ there exists $R$ so that $\pi^{-1}(B^Y_r(x))$ is geometrically separated from $\pi^{-1}(B^Y_r(y))$ whenever $x,y\in H$ satisfy $d(x,y)\geq R$.
\end{defn}

Here is a the definition of hyperbolic embeddability for metric space. The author makes no claim that this is the ``right'' one. A map $f:X\to Y$, for $X,Y$ metric spaces, is coarsely Lipschitz (resp. coarsely surjective) if there exists a constant $C$ so that $d_Y(f(x),f(y))\leq Cd(x,y)+C$ for any $x,y\in X$ (resp. $N^Y_C(f(X))=Y$).

\begin{defn}
 Let $X$ be a geodesic metric space, $\calP$ a collection of subsets and $\pi:X\to Y$ a coarsely Lipschitz and coarsely surjective map, with $Y$ another geodesic metric space. We say that $\calP$ is hyperbolically embedded in $(X,\pi)$, and write $\calP\hookrightarrow_h (X,\pi)$, if $Y$ is hyperbolic relative to $\{\pi(P)\}_{P\in\calP}$, $\pi|_P$ is proper and $\pi$ is acylindrical along $\pi(P)$ for each $P\in\calP$.
\end{defn}

At first, it may seem that the acylindricity condition should not be included, as in the groups case it follows from the other conditions. However, it is needed to exclude an example of the following kind: just take any relatively hyperbolic space $Y$, cross it with $\R$ to obtain $X$ and consider the projection on the first factor $\pi$.
Also, the proof of the acylindricity condition in the groups case involves in a rather essential way the group action and the properness of a Cayley graph of $G$, so it seems likely that one needs to have extra conditions in the metric setting.

We make the observation that, in a natural sense, the definition we gave is invariant under quasi-isometries. If $\pi_i:X_i\to Y_i$, for $i=0,1$ are maps between metric spaces, we say that $f:X_0\to X_1$ is \emph{coarsely compatible} with $\pi_0,\pi_1$ if for each $y\in Y_0$ and $r\geq 0$, we have that $\pi_1\circ f(\pi_0^{-1}(B_r^{Y_0}(y)))$ has diameter bounded by $C=C(f,r)$.

\begin{prop}\label{qiinv}
 Suppose $\calP_1\hookrightarrow_h (X_1,\pi_1)$, where $\pi_1:X_1\to Y_1$ and let $\pi_0:X_0\to Y_0$ be coarsely Lipschitz and coarsely surjective. Suppose that $f:X_0\to X_1$ is a quasi-isometry so that both $f$ and any quasi-inverse of $f$ are compatible with $\pi_0,\pi_1$.
 
 Then there exists $D_0$ so that for each $D\geq D_0$ we have $\calP_0\hookrightarrow_h (X_0,\pi_0)$ for $\calP_0=\{f^{-1}(N_D(P))\}_{P\in\calP_1}$.
\end{prop}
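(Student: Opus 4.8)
The plan is to push $f$ and a quasi-inverse down to mutually inverse quasi-isometries between $Y_0$ and $Y_1$, transport the asymptotically tree-graded structure of $Y_1$ along them via quasi-isometry invariance of relative hyperbolicity, check that the transported peripheral family coincides, up to uniformly bounded Hausdorff distance, with $\{\pi_0(f^{-1}(N_D(P)))\}$, and finally verify the properness and acylindricity conditions by hand. (Here $X_0$ and $Y_0$ are geodesic, as is implicit in writing $\calP_0\hookrightarrow_h(X_0,\pi_0)$; geodesicity of $Y_0$ is also what makes the first step work.) First I would define $\bar f\colon Y_0\to Y_1$ by choosing, for each $y\in Y_0$, some $x\in\pi_0^{-1}(B^{Y_0}_\epsilon(y))$ — nonempty since $\pi_0$ is coarsely surjective — and setting $\bar f(y)=\pi_1(f(x))$. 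Coarse compatibility of $f$ with $r=\epsilon$ makes this well defined up to bounded error, and the same estimate with $r=t+\epsilon$ (if $d_{Y_0}(y,y')=t$ then the chosen preimages of $y,y'$ all lie in $\pi_0^{-1}(B^{Y_0}_{t+\epsilon}(y'))$) shows $\bar f$ is bornologous. The same construction with a quasi-inverse $g$ of $f$ yields a bornologous $\bar g\colon Y_1\to Y_0$, and unwinding definitions — using that $g\circ f$, $f\circ g$ are close to the identities, that $\pi_0,\pi_1$ are coarsely Lipschitz, and compatibility once more — shows $\bar g\circ\bar f$ and $\bar f\circ\bar g$ are close to the identities. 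Since a bornologous map out of a geodesic space is automatically coarsely Lipschitz, $\bar f$ and $\bar g$ are mutually quasi-inverse quasi-isometries.

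Since $Y_1$ is hyperbolic relative to $\{\pi_1(P)\}$ and asymptotically tree-graded structures are quasi-isometry invariant \cite{DS-treegr}, $Y_0$ is hyperbolic relative to $\{\bar g(\pi_1(P))\}_{P\in\calP_1}$, and this is unaffected by replacing the peripherals by any family at uniformly bounded Hausdorff distance. So I would fix $D_0$ and show that for $D\ge D_0$ the sets $\bar g(\pi_1(P))$ and $\pi_0(f^{-1}(N_D(P)))$ are at uniformly bounded Hausdorff distance. If $f(x)\in N_D(P)$ then $\pi_1(f(x))=\bar f(\pi_0(x))$ (up to error) lies in a bounded neighbourhood of $\pi_1(P)$ because $\pi_1$ is coarsely Lipschitz, so $\pi_0(x)$ is close to $\bar g\bar f(\pi_0(x))$, which is close to $\bar g(\pi_1(P))$ since $\bar g$ is coarsely Lipschitz. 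Conversely, for $p\in P$ one has $f(g(p))\in N_{D_0}(p)$ with $D_0$ the closeness constant of $f\circ g$, so $g(p)\in f^{-1}(N_D(P))$ for $D\ge D_0$, and $\pi_0(g(p))$ is close to $\bar g(\pi_1(p))$ by compatibility of $g$. This gives the required relative hyperbolicity of $Y_0$.

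For properness of $\pi_0|_{f^{-1}(N_D(P))}$: if $x\in f^{-1}(N_D(P))$ with $\pi_0(x)$ in a fixed bounded subset of $Y_0$, then $f(x)\in N_D(P)$ and $\pi_1(f(x))$ lies in a bounded set ($\bar f$ coarsely Lipschitz); since $f(x)$ is within $D$ of some $p\in P$, also $\pi_1(p)$ lies in a bounded set, so properness of $\pi_1|_P$ bounds $p$, hence $f(x)$, hence $x$, as $f$ is a quasi-isometric embedding. For acylindricity of $\pi_0$ along $\pi_0(f^{-1}(N_D(P)))$: given $r$, for $u\in\pi_0(f^{-1}(N_D(P)))$ one has $\pi_0^{-1}(B^{Y_0}_r(u))\subseteq f^{-1}(\pi_1^{-1}(B^{Y_1}_{r'}(u')))$ for some $u'\in\pi_1(P)$ with $d_{Y_1}(\bar f(u),u')$ uniformly bounded and $r'$ depending only on $r$ and the coarse-Lipschitz constant of $\bar f$; acylindricity of $\pi_1$ along $\pi_1(P)$ then produces $R_1$ so that $\pi_1^{-1}(B^{Y_1}_{r'+\mathrm{const}}(u'))$ and $\pi_1^{-1}(B^{Y_1}_{r'+\mathrm{const}}(v'))$ are geometrically separated in $X_1$ whenever $d_{Y_1}(u',v')\ge R_1$. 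Choosing $R$ large enough that $d_{Y_0}(u,v)\ge R$ forces $d_{Y_1}(u',v')\ge R_1$ (possible since $\bar f$ is a quasi-isometric embedding), together with the elementary fact that a quasi-isometric embedding pulls geometrically separated sets back to geometrically separated sets, yields the desired separation in $X_0$, completing the verification.

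\textbf{Main obstacle.} The only genuinely nontrivial point is the first step: extracting from the purely fibrewise compatibility hypothesis an honest quasi-isometry $\bar f\colon Y_0\to Y_1$ — the bornologous-out-of-a-geodesic-space argument plus the bookkeeping that $\bar g$ is a coarse inverse. Everything afterwards is a routine application of quasi-isometry invariance of relative hyperbolicity and the stability of each of the three defining conditions under these coarse manipulations; the one subtle spot there is properness, since $\pi_1$ is only assumed proper on each $P$ and not globally, so one must genuinely route the argument through $N_D(P)$ rather than attempt to control arbitrary $\pi_1$-fibres.
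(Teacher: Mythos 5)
Your proposal is correct and follows essentially the same route as the paper: induce a quasi-isometry $\overline{f}\colon Y_0\to Y_1$ from the fibrewise compatibility hypothesis (the paper defines $\overline{f}(y)$ as any point of $\pi_1\circ f(\pi_0^{-1}(B_R(y)))$, exactly your construction), transport relative hyperbolicity by quasi-isometry invariance after matching $\{\pi_0(f^{-1}(N_D(P)))\}$ with the peripherals up to uniformly bounded Hausdorff distance, and then check properness and acylindricity by chasing definitions. The paper leaves those last verifications to the reader, so your explicit arguments (including the pullback of geometric separation under a quasi-isometric embedding) simply fill in details the paper asserts are routine.
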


\begin{proof}
 Consider a map $\overline{f}:Y_0\to Y_1$ mapping $y$ to any point in $\pi_1\circ f(\pi_0^{-1}(B_R^{Y_0}(y)))$, where $R$ is large enough that the image $N_R(\pi_0(X_0))=Y_0$. This map is easily seen to be coarsely Lipschitz. Also, it has a coarsely Lipschitz quasi-inverse, constructed in a similar way starting from a quasi-inverse of $f$.
 
 \begin{figure}[h]
 \includegraphics[scale=0.8]{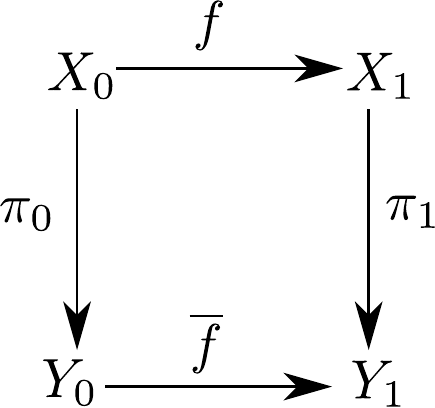}
 \caption{The diagram coarsely commutes.}
 \end{figure}

 In particular, $\overline{f}$ is a quasi-isometry. By quasi-isometry invariance of relative hyperbolicity, we see that $Y_0$ is hyperbolic relative to the collection of subsets $\calP_0=\{\pi_0(f^{-1}(N_D(P)))\}_{P\in\calP_1}$. This is because the Hausdorff distance between the image via $\overline{f}$ of $\pi_0(f^{-1}(N_D(P)))$ is within uniformly bounded Hausdorff distance of $\pi_1(P)$.
 
 The fact that $\pi_0|_{P_0}$ is proper for each $P_0\in\calP_0$ and the acylindricity condition are readily checked just chasing the definitions.
\end{proof}

The following corollary is immediate.

\begin{cor}
 Suppose $\calP_1\hookrightarrow_h (X_1,\pi_1)$ and let $f:X_0\to X_1$ be a quasi-isometry. Then there exists $D_0$ so that for each $D\geq D_0$ we have $\calP_0\hookrightarrow_h (X_0,\pi_1\circ f)$ for $\calP_0=\{f^{-1}(N_D(P))\}_{P\in\calP_1}$.
\end{cor}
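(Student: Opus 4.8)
The plan is to deduce this immediately from Proposition \ref{qiinv}, applied with $Y_0=Y_1$ (so the target metric spaces agree), $\pi_0=\pi_1\circ f:X_0\to Y_1$, and the given $f:X_0\to X_1$. All that is required is to verify the hypotheses of that proposition in this special case.

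First I would check that $\pi_0=\pi_1\circ f$ is coarsely Lipschitz and coarsely surjective: since $f$ is a quasi-isometry it is in particular coarsely Lipschitz and coarsely surjective, $\pi_1$ is coarsely Lipschitz and coarsely surjective by hypothesis, and both properties pass to compositions. Next, the one substantive point is that $f$ and any quasi-inverse $g$ of $f$ are coarsely compatible with the relevant pairs of maps. For $f$ itself this is essentially free: for $y\in Y_1$ and $r\ge 0$ we have $\pi_0^{-1}(B_r^{Y_1}(y))=f^{-1}\bigl(\pi_1^{-1}(B_r^{Y_1}(y))\bigr)$, hence
$$\pi_1\bigl(f(\pi_0^{-1}(B_r^{Y_1}(y)))\bigr)\subseteq \pi_1\bigl(\pi_1^{-1}(B_r^{Y_1}(y))\bigr)\subseteq B_r^{Y_1}(y),$$
which has diameter at most $2r$, so the compatibility constant can be taken to be $2r$ (independent of $f$). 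For a quasi-inverse $g:X_1\to X_0$ of $f$, let $K$ be a constant with $d_{X_1}(f\circ g(x),x)\le K$ for all $x$, and let $M$ be a coarse Lipschitz constant for $\pi_1$. Then $\pi_0\circ g=\pi_1\circ f\circ g$, and for $y\in Y_1$, $r\ge0$,
$$\pi_0\bigl(g(\pi_1^{-1}(B_r^{Y_1}(y)))\bigr)=\pi_1\bigl(f\circ g(\pi_1^{-1}(B_r^{Y_1}(y)))\bigr)\subseteq N_{MK+M}\bigl(\pi_1(\pi_1^{-1}(B_r^{Y_1}(y)))\bigr)\subseteq N_{MK+M}\bigl(B_r^{Y_1}(y)\bigr),$$
which again has uniformly bounded diameter. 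So $g$ is coarsely compatible with $\pi_1,\pi_0$.

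With these verifications in place, Proposition \ref{qiinv} applies directly and yields $D_0$ such that for every $D\ge D_0$ one has $\calP_0=\{f^{-1}(N_D(P))\}_{P\in\calP_1}\hookrightarrow_h (X_0,\pi_0)$; since $\pi_0=\pi_1\circ f$ by construction, this is exactly the asserted conclusion. I do not expect any real obstacle here: the corollary is genuinely a specialization of the proposition, and the only point demanding a moment's care is confirming that the coarse-compatibility constants for the quasi-inverse $g$ are uniform, which is what the quasi-isometry estimate for $f\circ g$ together with the coarse Lipschitz bound on $\pi_1$ provides.
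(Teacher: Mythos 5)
Your proof is correct and matches the paper's intent: the corollary is stated there as an immediate consequence of Proposition \ref{qiinv}, obtained exactly by taking $Y_0=Y_1$ and $\pi_0=\pi_1\circ f$ as you do. Your verifications of coarse Lipschitzness, coarse surjectivity, and the coarse compatibility of $f$ and its quasi-inverse are the routine checks the paper leaves implicit, and they are carried out correctly.
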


We now proceed showing a naturally occurring example of hyperbolically embedded family in a non-relatively hyperbolic space (which is not a group).

\begin{thm}\label{teich}
 Let $Teich^{WP}(S_{g,p})$ be the Teichm\"uller space of the closed connected orientable surface of genus $g$ with $p$ punctures endowed with the Weil-Petersson metric $d_{WP}$, and suppose $3g+p\geq 6$. Let $\phi: S_g\to S_g$ be a pseudo-Anosov with axis $A$ in $Teich^{WP}(S_{g,p})$ and let $\calA$ be a collection of distinct axes of conjugates of $\phi$. Then
$$\calA\hookrightarrow_h (Teich^{WP}(S_{g,p}),\pi),$$
for $\pi: Teich^{WP}(S_{g,p})\to \calC(S_{g,p})$ the shortest-curve map to the curve complex $\calC(S_{g,p})$ of $S_{g,p}$.
\end{thm}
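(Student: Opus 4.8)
The plan is to verify, one at a time, the three conditions in the definition of $\calA\hookrightarrow_h(Teich^{WP}(S_{g,p}),\pi)$: that $\calC(S_{g,p})$ is hyperbolic relative to $\{\pi(A)\}_{A\in\calA}$, that $\pi|_A$ is proper for each $A\in\calA$, and that $\pi$ is acylindrical along each $\pi(A)$. Throughout I would use the following inputs: $\calC(S_{g,p})$ is Gromov hyperbolic and $\phi$ acts on it loxodromically (Masur--Minsky); the action of the mapping class group on $\calC(S_{g,p})$ is acylindrical (Bowditch); every conjugate of $\phi$ is axial for $d_{WP}$, i.e.\ preserves a geodesic line in $Teich^{WP}(S_{g,p})$ (Daskalopoulos--Wentworth); $Teich^{WP}(S_{g,p})$ is quasi-isometric to the pants graph $\calP(S_{g,p})$ (Brock); and the Masur--Minsky distance formula for $\calP(S_{g,p})$ together with the Bounded Geodesic Image Theorem. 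The standing requirement that $\pi$ be coarsely Lipschitz and coarsely surjective is easy: $\pi$ agrees up to bounded error with Brock's quasi-isometry $Teich^{WP}(S_{g,p})\to\calP(S_{g,p})$ followed by the (coarsely $1$-Lipschitz) ``select one pants curve'' map $\calP(S_{g,p})\to\calC(S_{g,p})$, and every essential curve is the systole of some hyperbolic structure. (A minor point: the shortest curve is only well defined up to bounded ambiguity, but distinct systoles of a hyperbolic surface are disjoint, hence at distance $\le1$ in $\calC(S_{g,p})$, so this is harmless.)

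For the first condition: since $\pi$ is coarsely equivariant for the mapping class group action, if $A$ is the $d_{WP}$-axis of $\psi=g\phi g^{-1}$ then $\pi(A)$ lies at bounded Hausdorff distance from an orbit $\langle\psi\rangle\cdot v$ in $\calC(S_{g,p})$, and such an orbit is a quasi-geodesic with constants depending only on the loxodromic parameters of $\phi$ on $\calC(S_{g,p})$ and on the $d_{WP}$-translation length of $\phi$ --- hence depending only on $\phi$ and not on the conjugator. So each $\pi(A)$ is a uniformly quasi-convex quasi-line, and $\pi|_A$ is a quasi-isometric embedding of a line, in particular proper. For distinct $A,A'\in\calA$ the pseudo-Anosovs $\psi,\psi'$ are independent loxodromics on $\calC(S_{g,p})$: a shared fixed pair on $\partial\calC(S_{g,p})$ would mean shared stable and unstable laminations, forcing $\langle\psi\rangle$ and $\langle\psi'\rangle$ commensurable and hence the two $d_{WP}$-axes equal, contrary to assumption. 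By acylindricity of the action, independent loxodromics have quasi-axes with uniformly bounded coarse intersection --- uniform here precisely because all the translation lengths equal that of $\phi$ (cf.\ \cite{DGO}). Finally, a geodesic hyperbolic space is hyperbolic relative to any collection of uniformly quasi-convex subsets with uniformly bounded pairwise coarse intersections (cf.\ \cite{DS-treegr}); applied to $\{\pi(A)\}_{A\in\calA}$ this gives the first condition.

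The main point is the acylindricity of $\pi$ along $\pi(A)$: for each $r$ I must produce $R$ so that $\pi^{-1}(B_r(x))$ and $\pi^{-1}(B_r(y))$ are geometrically separated in $Teich^{WP}(S_{g,p})$ whenever $x,y\in\pi(A)$ satisfy $d_{\calC(S_{g,p})}(x,y)\ge R$. Unlike in the group case this does not come for free, and the orbit-map argument of Lemma~\ref{fibgeomsep} is unavailable, since the orbit map of the mapping class group into $Teich^{WP}(S_{g,p})$ (or into $\calP(S_{g,p})$) is coarsely surjective but very far from a quasi-isometric embedding. Instead I would transport the problem to $\calP(S_{g,p})$ via Brock and argue with the distance formula. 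Fix $D$ and take pants decompositions $\mu_1,\mu_2$ with $\pi$-image in $B_r(x)$ and $\nu_1,\nu_2$ with $\pi$-image in $B_r(y)$, with $d_{\calP}(\mu_i,\nu_i)\le D$; the goal is a bound on $d_{\calP}(\nu_1,\nu_2)$ depending only on $D$ and $r$. If a non-annular proper subsurface $W$ has $d_W(\nu_1,\nu_2)$ above the distance-formula threshold, the Bounded Geodesic Image Theorem forces a $\calC(S_{g,p})$-geodesic between the two (nearly $y$) vertices $\pi(\nu_i)$ to pass near $\partial W$, so $\partial W$ lies within $\approx r$ of $y$; as $d_{\calC(S_{g,p})}(x,y)\ge R$, for $R$ large this puts $\partial W$ far from $x$, and the same theorem then bounds $d_W(\mu_1,\mu_2)$ by a universal constant. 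Combined with $d_W(\mu_i,\nu_i)\le O(D)$ (the coarsely Lipschitz direction of the distance formula) this bounds $d_W(\nu_1,\nu_2)$ in terms of $D$, and also shows that each such $W$ already contributes a large term to $d_{\calP}(\mu_i,\nu_i)$ for some $i$; the other direction of the distance formula, applied to the pairs $(\mu_i,\nu_i)$ whose distances are $\le D$, then bounds the number of such $W$ in terms of $D$. Summing over $W$ (with the $W=S$ term harmless since $\pi(\nu_1),\pi(\nu_2)$ are $2r$-close) yields the desired bound on $d_{\calP}(\nu_1,\nu_2)$, i.e.\ $N_D(\pi^{-1}(B_r(x)))\cap\pi^{-1}(B_r(y))$ has bounded diameter in $\calP(S_{g,p})$; Brock's quasi-isometry carries this back to $Teich^{WP}(S_{g,p})$.

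I expect this third step to be the principal obstacle: it is the only place where one must invoke the combinatorial model of the Weil--Petersson metric together with the full hierarchy machinery (distance formula plus Bounded Geodesic Image), whereas the relative hyperbolicity and properness conditions follow fairly softly once the theorems of Masur--Minsky, Bowditch and Daskalopoulos--Wentworth are assembled as above.
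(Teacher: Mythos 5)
Your proposal is correct and follows essentially the same route as the paper: you verify the same three conditions with the same inputs (Masur--Minsky hyperbolicity and loxodromicity of pseudo-Anosovs on the curve complex, Bowditch's acylindricity feeding the quasi-convexity/bounded-coset-intersection criterion for relative hyperbolicity, and the distance formula plus the Bounded Geodesic Image Theorem for the geometric separation). The only cosmetic difference is that you run the separation estimate in the pants graph and bound the distance between two points of one fibre, obtaining a bound depending only on $D$ and $r$, whereas the paper uses the Weil--Petersson distance formula directly and bounds $d_{WP}(x,p)$ with a threshold depending on the subsurface projections of the pair $x,y$ (which is all that geometric separation requires) --- both arguments deploy the two Masur--Minsky theorems in the same way.
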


An analogous theorem holds for the pants graph, as it is quasi-isometric to $Teich^{WP}(S_{g,p})$ \cite{Br-pants}.

\begin{proof}
 As $\calC(S_{g,p})$ is hyperbolic \cite{MM1}, it is hyperbolic relative to $\calB=\{\pi(A)\}$ if (and only if) all elements of $\calB$ are uniformly quasi-convex and for each $D$ there exists $B$ so that all distinct $B_1,B_2\in\calB$ have the property that the diameter of $N_D(B_1)\cap B_2$ is at most $B$, see \cite[Section 7]{Bow-99-rel-hyp}. This is the case because pseudo-Anosovs act loxodromically on the curve complex \cite{MM1} and the action of a mapping class group on the corresponding curve complex is acylindrical \cite{B-tight} (this is the same reason why $\phi$ is contained in a virtually cyclic hyperbolically embedded subgroup of the mapping class group of $S_{g,p}$, \cite[Theorem 6.47]{DGO}).

Also, the fact that $\phi$ acts loxodromically on the curve complex also implies that $\pi$ is proper when restricted to $A$ or one of its translates.

In order to show the geometric separation property, we can use the following results from \cite{MM2} about Teichm\"uller space (see also the discussion before and after \cite[Theorem 2.6]{BMM-WP2}). We write $A\approx_{K,C} B$ if the quantities $A,B$ satisfy
$$A/K-C\leq B\leq KA+C.$$
Let $\{\{A\}\}_L$ denote $A$ if $A\geq L$ and $0$ otherwise.

\begin{thm}
For $Y$ a non-annular subsurface of $S_{g,p}$ and for $x,y\in Teich^{WP}(S_{g,p})$, denote by $d_{\calC(Y)}(x,y)$ the diameter in the curve complex $\calC(Y)$ of $Y$ of $\pi_Y(x)\cup\pi_Y(y)$, where $\pi_Y$ denotes the subsurface projection on $\calC(Y)$.
\begin{enumerate}
 \item (Distance Formula) There exists $L_0$ with the property that for each $L\geq L_0$ there are $K,C$ so that, for each $x,y\in Teich^{WP}(S_{g,p})$,
 $$d_{WP}(x,y)\approx_{K,C} \sum_Y \{\{d_Y(x,y)\}\}_L,$$
 where the sum is taken over all (isotopy classes of) non-annular subsurfaces $Y$ of $S$.
 \item (Bounded Geodesic Image Theorem)  There exists $C$ with the following property. If $\gamma$ is a geodesic in $\calC(S)$ so that for some proper subsurface $Y$ we have that $\pi_Y(v)$ is non-empty for every vertex $v\in\gamma$ then $\pi_Y(\gamma)$ has diameter at most $C$.
\end{enumerate}
\end{thm}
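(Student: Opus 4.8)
The statement packages two results of \cite{MM2}: the bounded geodesic image theorem (2) and the distance formula (1). These are not independent --- the projection estimate (2) is a basic tool in the proof of (1) --- so the plan is to establish (2) first and then feed it into (1), which I would in turn reduce to a combinatorial statement about the pants graph via Brock's work.

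\emph{Bounded geodesic image.} Let $\gamma=(v_0,\dots,v_n)$ be a geodesic in $\calC(S)$ with $\pi_Y(v_i)\neq\emptyset$ for every $i$. The elementary input is that consecutive vertices are disjoint, so $d_Y(v_i,v_{i+1})$ is bounded by a universal constant; summing gives only the useless linear bound $d_Y(v_0,v_n)\lesssim n$. The content of the theorem is to remove the dependence on $n$, and this is exactly where hyperbolicity of $\calC(S)$ \cite{MM1} and the hypothesis that $\gamma$ is geodesic (not merely a path) must enter. I would run the Masur--Minsky scheme: pass to a tight geodesic representative and argue that a large value of $d_Y(v_0,v_n)$ forces $\gamma$ to fellow-travel the boundary $\partial Y$ over a long stretch, along which some vertex would be disjoint from $Y$ and hence have empty projection, contradicting the hypothesis. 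A more combinatorial surgery argument with unicorn paths gives an explicit constant without reference to the hyperbolicity constant.

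\emph{Distance formula.} The efficient route factors through Brock's theorem \cite{Br-pants} that the shortest-pants-decomposition map $Teich^{WP}(S_{g,p})\to P(S_{g,p})$ to the pants graph is a quasi-isometry, reducing (1) to the combinatorial formula $d_{P}(\mu,\nu)\approx_{K,C}\sum_Y\{\{d_Y(\mu,\nu)\}\}_L$, the sum over non-annular $Y$. For the lower bound on distance, the subsurfaces with $d_Y\geq L$ are linearly ordered along any geodesic of $P(S_{g,p})$ --- this is the Behrstock-type time-ordering, which itself rests on (2) --- and the sum of their projection distances then telescopes to $\lesssim d_P(\mu,\nu)$. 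For the upper bound, one resolves a Masur--Minsky hierarchy between $\mu$ and $\nu$ into a path in $P(S_{g,p})$ whose length is dominated by the same sum. The reason only non-annular $Y$ occur is that an elementary move in $P(S_{g,p})$ is invisible to annular (Dehn-twist) projections. Transporting the formula across Brock's quasi-isometry only changes $K$, $C$ and the threshold $L$.

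The main obstacle throughout is the passage from a Lipschitz, and hence linear, control of subsurface projections to a uniform, length-independent bound: this is the entire content of (2) and, through the time-ordering it powers, the heart of the lower bound in (1). In both places the decisive mechanism is hyperbolicity of the curve complex together with the tightness/acylindricity technology of \cite{B-tight,MM2}, so the realistic plan is to assemble these imported ingredients carefully rather than to rebuild the hierarchy theory from scratch.
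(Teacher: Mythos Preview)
The paper does not prove this theorem at all: it is quoted verbatim as a pair of known results from \cite{MM2} (with a pointer to the discussion in \cite{BMM-WP2}), and then used as a black box inside the proof of Theorem~\ref{teich}. So there is nothing to compare your argument to.

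Your sketch is a reasonable high-level summary of how the Masur--Minsky machinery goes, and the reduction of the Weil--Petersson distance formula to the pants-graph distance formula via \cite{Br-pants} is indeed the standard route. But none of this belongs here: in the context of this paper the correct ``proof'' is a citation, and rebuilding hierarchy theory would be both out of place and far beyond what your outline actually pins down (the time-ordering/Behrstock inequality and the hierarchy resolution are each substantial pieces you are only naming, not proving).
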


Suppose that $\pi(x),\pi(y)\in \pi(A)$ are far enough compared to $r$. Pick $p,q$ in Teichm\"uller space so that $\pi(p)\in B_r^{\calC(S_{g,p})}(\pi(x))$, $\pi(q)\in B_r^{\calC(S_{g,p})}(\pi(y))$, and suppose $d_{WP}(p,q)\leq D$. There is a bound $C_1=C_1(D, S_{g,p})$, coming from the distance formula, on $d_{\calC(Y)}(\pi_Y(p),\pi_Y(q))$ for any subsurface $Y\subseteq S_{g,p}$. By the Bounded Geodesic Image theorem (up to increasing $C_1$) we know that any proper subsurface $Y$ so that $d_{\calC(Y)}(p,x)\geq C_1$ appears along any geodesic from $\pi(p)$ to $\pi(x)$. For any such subsurface $Y$, as any geodesic from $\pi(q)$ to $\pi(y)$ is far from $\partial Y$ in $\calC(S_{g,p})$, using the Bounded Geodesic Image theorem we can make the estimate
$$d_{\calC(Y)}(x,p)\leq d_{\calC(Y)}(x,y)+d_{\calC(Y)}(y,q)+d_{\calC(Y)}(q,y)\leq d_{\calC(Y)}(x,y)+ C_2,$$
where $C_2$ depends on $C_1$ and $S_{g,p}$ only.
In particular, using the distance formula with threshold $L>\max\{d_{\calC(Y)}(x,y)+ C_2\}$, we get a bound on $d_{WP}(x,p)$, and the proof is complete.
\end{proof}

\bibliographystyle{alpha}
\bibliography{phd.bib}

\end{document}